\newtheorem{theorem}{Theorem}[section]
\newtheorem{prop}[theorem]{Proposition}
\newtheorem{lemma}[theorem]{Lemma}
\newtheorem{definition}[theorem]{Definition}
\newtheorem{defn}[theorem]{Definition}
\newtheorem{rmk}[theorem]{Remark}
\newtheorem{eg}[theorem]{Example}
\newtheorem{cond}[theorem]{Condition}
\newcommand\FF{{\mathcal F}}
\newcommand\LL{{L}}
\newcommand\MM{{\mathcal M}}
\newcommand\PP{{\mathcal P}}
\newcommand\PMF{{\PP\kern-2pt\MM\FF}}
\newcommand\PML{{\PP\kern-2pt\MM\LL}}
\newcommand\ep{\epsilon}
\newcommand{\sas}{{S\alpha S}}
\newcommand{\LLL}{L}
\newcommand{\mups}{{\mu^{PS}}}
\newcommand{\mubms}{{\mu^{BMS}}}
\newcommand{\om}{{\Omega}}
\newcommand{\ind}{\mathbf{1}}
\newcommand{\e}{\epsilon}
\newcommand\eqd{\stackrel{d}{=}}
\numberwithin{equation}{section}
\begin{document}
	
	\title[Mixing Properties of Stable Fields]{Mixing Properties of Stable Random Fields Indexed by Amenable and Hyperbolic Groups}
	
	\author{Mahan Mj}
	\address{Mahan Mj, School of Mathematics, Tata Institute of Fundamental Research, 1 Homi Bhabha Road, Mumbai 400005, India}
	
	\email{mahan@math.tifr.res.in}
	\email{mahan.mj@gmail.com}
	
	\author{Parthanil Roy}
	\address{Parthanil Roy, Theoretical Statistics and Mathematics Unit, Indian Statistical Institute, 8th Mile, Mysore Road, RVCE Post, Bangalore 560059, India}
	
	\email{parthanil.roy@gmail.com}
	
	\author{Sourav Sarkar}
	\address{Sourav Sarkar, Centre for Mathematical Sciences, University of Cambridge, Wilberforce Road, Cambridge CB3~0WB}
	\email{ss2871@cam.ac.uk}
	
	\thanks{M.M.~is partially supported by a DST J~C~Bose Fellowship, an endowment from the Infosys Foundation, and by the Department of Atomic Energy, Government of India, under project no.12-R\&D-TFR-5.01-0500. M.M. also thanks  CNRS for support during a visit to Institut Henri Poincar\'e in the summer of 2022. P.R.~is partially supported by a DST SwarnaJayanti Fellowship, and a SERB grant MTR/2017/000513.}
	
	\subjclass[2010]{Primary 60G52, 60G60, 60G10; Secondary 20F67, 37A40, 37A50, 43A07, 46L10.}
	
	\keywords{Stable process, random field, weak mixing, ergodicity, amenable group, hyperbolic group, von Neumann algebra}


\begin{abstract}
We show that any stationary symmetric $\alpha$-stable ($\sas$) random field indexed by a countable amenable group $G$ is weakly mixing if and only if it is generated by a null action, extending the work of Samorodnitsky and Wang-Roy-Stoev for abelian groups to all amenable groups. This enables us to improve significantly the domain of a recently discovered connection to von Neumann algebras. We also establish ergodicity of stationary $\sas$ fields associated with boundary and  double boundary actions of a hyperbolic group $G$, where the boundary is equipped with either the Patterson-Sullivan or the hitting measure of a random walk, and the double boundary is equipped with the Bowen-Margulis-Sullivan measure.
\end{abstract}

	\maketitle

\section{Introduction}\label{sec-intro}

This paper deals with various mixing properties of stationary symmetric stable random fields indexed by countably infinite groups, particularly amenable and hyperbolic ones. Random fields indexed by amenable groups (e.g., Heisenberg groups, discrete matrix groups, group of permutations of $\mathbb{N}$ with finite support, etc.) arise naturally in machine learning algorithms for structured and dependent space-time data \cite{austern:orbanz:2022}.  On the other hand, random fields indexed by hyperbolic groups are useful in tree-indexed processes, branching models, etc.~\cite{pemantle:1995,sarkar:roy:2018, athreya:mj:roy:2019}. Mixing properties of these fields are important tools for estimation of ruin probabilities for stable processes \cite{mikosch:samorodnitsky:2000a}, investigation of asymptotic properties of algorithms in the context of space-time statistical inference for max-stable random fields \cite{davis:kluppelberg:steinkohl:2013}, analysis of limiting behavior for functionals of L\'evy driven processes \cite{basse-oconnor:heinrich:podolskij:2019}, and so on.

Let $G$ be a countably infinite group and $\mathbf{Y}=\{Y_g\}_{g \in G}$ be a random field (i.e., a collection of random variables defined on a common probability space $(\Omega, \mathcal{A}, \mathbb{P})$) indexed by $G$. Such a random field is called \emph{symmetric $\alpha$-stable} ($\sas$) if each finite linear combination of $Y_g$'s follows an $\sas$ distribution.  A $\sas$ random field is called left-stationary if its law is invariant under the left-translation of its indices. Thanks to Rosi\'nski~\cite{rosinski:1995, rosinski:2000}, the joint distribution (and hence the dependence structure) of any left-stationary $\sas$ field is uniquely determined by a non-singular $G$-action, an $L^\alpha$ function and a $\pm 1$-valued cocycle (see Section~\ref{sec:sas} below). Among these, the non-singular action, being an  infinite-dimensional parameter of a left-stationary $\sas$ random field, contains a lot of information about various probabilistic properties of the field.

Keeping the above discussion in mind, it is not surprising that various  aspects of probability theory (e.g., growth of maxima \cite{samorodnitsky:2004a,roy:samorodnitsky:2008,owada:samorodnitsky:2015a,sarkar:roy:2018,athreya:mj:roy:2019}, extremal point processes \cite{resnick:samorodnitsky:2004,roy:2010a,sarkar:roy:2018},  mixing features  \cite{rosinski:samorodnitsky:1996,samorodnitsky:2005a,roy:2007a, roy:2012,wang:roy:stoev:2013}, path properties \cite{panigrahi:roy:xiao:2021}, large deviations \cite{mikosch:samorodnitsky:2000a,fasen:roy:2016}, functional central limit theorem  \cite{owada:samorodnitsky:2015b,jung:owada:samorodnitsky:2017},  etc.) of a stationary
$\sas$ random field have been linked to the ergodic theory of the underlying non-singular action. In this paper, we extend this connection  to the domain of geometric group theory, particularly amenable and hyperbolic groups. We also significantly extend the range of application of a recently discovered association with von Neumann algebras resolving completely
Problem~3 (and hence Problem~1 and Conjecture~2) of \cite{roy:2020}. We have also resolved Conjecture~5 (for hyperbolic groups, not just for free groups) of this reference positively in Section~\ref{sec-hyp}. 

We prove the two necessary and sufficient conditions for weak mixing of a left-stationary $\sas$ random field indexed by an amenable group - one is in terms of the ergodic theory of the underlying non-singular group action and the other is a von-Neumann algebraic characterization via the associated crossed product construction.
\begin{theorem}\label{t:mainintro}
	Suppose that $G$ is a countable  amenable group and $\mathbf{Y}=\{Y_g\}_{g \in G}$ is a stationary $\sas$ random field. Suppose that $\mathbf{Y}$ admits a Rosi\'{n}ski representation such that the underlying non-singular $G$-action $\{\phi_g\}$ is free. Then the following are equivalent.
	\begin{enumerate}
		\item  $\{Y_g\}_{g \in G}$ is generated by a null action (in some and hence all Rosi{\'n}ski representations).
		\item $\{Y_g\}_{g \in G}$ is weakly mixing;
		\item  the group measure space construction corresponding to $\{\phi_g\}_{g \in G}$ does not admit a $II_1$ factor in its central decomposition.
	\end{enumerate}
\end{theorem}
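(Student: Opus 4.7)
The plan is to establish (1)$\Leftrightarrow$(2) by adapting the Samorodnitsky and Wang--Roy--Stoev strategy (originally developed for $\mathbb{Z}$ and then general abelian groups) to countable amenable $G$, and to derive (1)$\Leftrightarrow$(3) by combining Rosi\'nski's positive--null dichotomy with the Murray--von Neumann--Connes classification of factors arising from the group measure space construction. Throughout, fix a Rosi\'nski representation
\[
Y_g \;=\; \int_S c_g(s)\,f(\phi_g^{-1} s)\,\Bigl(\tfrac{d(\mu\circ \phi_g^{-1})}{d\mu}(s)\Bigr)^{1/\alpha}\,M(ds),
\]
with $(\phi_g)_{g\in G}$ the given free non-singular action on $(S,\mu)$, $f\in L^\alpha(\mu)$, $c_g$ a $\pm 1$-cocycle, and $M$ a control-measure-$\mu$ $\sas$ random measure; freeness guarantees that this representation is minimal, so properties of the triple $(S,\mu,\phi)$ are genuine invariants of $\mathbf{Y}$.

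For the direction (1)$\Rightarrow$(2), I would first translate weak mixing of $\mathbf{Y}$, via the Rosi\'nski--\.Zak style covariation formula, into the Følner--Ces\`aro condition
\[
\frac{1}{|F_n|}\sum_{g\in F_n}\int_S |f(s)\,f(\phi_g^{-1}s)|^{\alpha/2}\,\Bigl(\tfrac{d(\mu\circ \phi_g^{-1})}{d\mu}(s)\Bigr)^{1/2} d\mu(s)\;\longrightarrow\;0
\]
for some (equivalently, every) Følner sequence $(F_n)\subset G$. I would then invoke the Day--Greenleaf mean ergodic theorem, applied to the Koopman representation on an appropriate $L^2$-space of the Maharam skew-product extension of $(\phi_g)$. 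Nullness of the action is equivalent, through the Hopf-style dichotomy, to the Maharam extension admitting no invariant probability density, and the mean ergodic theorem then forces the displayed averages to vanish.

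For the reverse (2)$\Rightarrow$(1), I would argue by contraposition: if the action is not null, Rosi\'nski's decomposition produces a positive-$\mu$-measure piece $S_P$ carrying a $G$-invariant probability $\nu\sim\mu|_{S_P}$, and restricting the Rosi\'nski representation to $S_P$ gives a nontrivial measure-preserving $\sas$ sub-field that we need to show is not weakly mixing. This is the main obstacle, because the abelian proofs use Fourier analysis on the dual group, which is unavailable here. My substitute is the Koopman representation theory for amenable group actions on $L^2(\nu)$: the constant function $1$ is $G$-invariant, and using the Dye--Zimmer compact/weak-mixing dichotomy together with a Følner-averaging argument I would extract from the almost-periodic part a nonvanishing contribution to the Ces\`aro--Følner limits of the covariations, contradicting weak mixing of $\mathbf{Y}$.

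Finally, for (1)$\Leftrightarrow$(3), freeness of $(\phi_g)$ identifies the centre of the crossed product $M=L^\infty(S,\mu)\rtimes_\phi G$ with the $G$-invariant subalgebra $L^\infty(S,\mu)^G$, so that the central direct-integral decomposition of $M$ is dual to the ergodic decomposition of $(\phi_g)$. On each ergodic component the crossed product is a factor, and by the Murray--von Neumann--Connes classification it is of type $II_1$ precisely when that component admits a $G$-invariant probability measure equivalent to $\mu$. Consequently (3) is equivalent to $\mu$-nullity of the collection of such positive ergodic components, which is exactly the ``generated by a null action'' condition of (1). To summarise: the genuinely new analytic input needed is the amenable replacement for the Fourier-analytic step in (2)$\Rightarrow$(1); everything else in the argument amounts to combining the correct Følner-averaged characterisation of weak mixing for $\sas$ fields with the well-developed dictionary between Rosi\'nski's ergodic-theoretic decomposition and the type decomposition of the crossed product.
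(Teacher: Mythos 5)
Your overall architecture is essentially the paper's: a F{\o}lner-averaged functional criterion for weak mixing of the field, passage to the Maharam skew product where the action becomes measure preserving, an amenable ergodic theorem to annihilate the averages in the null case, and, for (3), the identification (under freeness) of the centre of $L^\infty(S,\mu)\rtimes G$ with $L^\infty(S,\mu)^G$ together with the type classification of the factors over the ergodic decomposition (the paper simply invokes Theorem~5.10 of Roy (2020) for this last step). The genuine differences are technical. The paper works with the Gross--Samorodnitsky set criterion $\frac{1}{|F_n|}\sum_{g\in F_n}\mu(\delta\le |f_e|\le\delta^{-1},\,|f_g|\ge\epsilon)\to 0$ (Lemma~2.7), reduces to it by an explicit truncation (bounded $f$, Radon--Nikodym derivatives confined to $[L^{-1},L]$, then the Maharam extension, Section~3), and then applies Tempelman's \emph{pointwise} ergodic theorem for Lamperti $L^p$-representations along a \emph{tempered} F{\o}lner sequence. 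You propose instead the covariation criterion $\frac{1}{|F_n|}\sum_{g\in F_n}\int_S|f_e f_g|^{\alpha/2}\,d\mu\to 0$ and a mean ergodic theorem. That route is viable and arguably lighter: the mean ergodic theorem needs no temperedness, and nullness (of the Maharam extension, or directly of the nonsingular Koopman representation, since a nonzero invariant vector $h$ yields the finite invariant measure $h^2\,d\mu$ on $\{h\neq 0\}$) forces the limit $\|P\xi\|^2$ to vanish for $\xi=|f_e|^{\alpha/2}$. But be aware that the equivalence of your covariation condition with weak mixing is precisely where the work of Lemma~2.7 and the Section~3 uniform-integrability estimates migrates; it has to be proved (Markov's inequality in one direction, uniform integrability of $|f_e|^\alpha$ plus the Koopman--von Neumann lemma in the other, after normalizing $\mu$ to a probability measure), not merely cited as a ``Rosi\'nski--\.Zak style formula.''

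Two assertions need correction, and one step needs to be firmed up. First, freeness of $\{\phi_g\}$ does \emph{not} make the representation minimal; the representation-independence of ``generated by a null action'' comes instead from the positive--null decomposition via Takahashi's theorem on amenable semigroups (Proposition~4.1 and Remark~4.2 in the paper), which is also why the paper's equivalence (1)$\Leftrightarrow$(2) requires no freeness at all --- freeness enters only for the von Neumann algebraic statement (3). Second, your contrapositive (2)$\Rightarrow$(1) is the vaguest step, and the Dye--Zimmer compact/weak-mixing dichotomy is not the right tool and is not needed: after restricting to the positive part $\mathcal{P}$ (legitimate by the law-invariant positive--null decomposition), the density $h$ of the equivalent invariant probability is itself a nonzero invariant vector for the Koopman representation; the full-support assumption $\bigcup_g\{f_g\neq 0\}=S$ together with invariance of $\mathcal{P}$ forces $\int_{\mathcal P}|f_e|^{\alpha/2}h\,d\mu>0$, so the F{\o}lner averages of the covariations converge to a strictly positive limit, contradicting the criterion (the paper runs this argument with Lindenstrauss's pointwise theorem, but the mean ergodic theorem suffices). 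Finally, in your type analysis for (3), the statement ``type $II_1$ precisely when the component carries an equivalent invariant probability'' needs the remark that freeness of an action of an infinite group excludes atomic ergodic components, which is what rules out type $I$ factors there; with that caveat your argument matches the quoted result of Roy (2020).
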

\noindent See Theorem \ref{t:main} for the equivalence of (1) and (2) (which does not require the assumption of freeness of the action), and Theorem \ref{thm:wm_charac_vNalg} for the equivalence of (2) and (3).

The main challenge in the proof of Theorem~\ref{t:mainintro} is ergodic theoretic - more precisely, the unavailability of an ergodic theorem for non-singular (but not necessarily measure preserving) actions of amenable groups even along a tempered F{\o}lner sequence.
We remove this obstacle with the help of a probabilistic truncation argument along with a theorem of Lindenstrauss \cite{lindenstrauss:2001} refined by Tempelman  \cite{tempelman:2015} and finally by applying the Maharam skew-product. 
This extends the main theorems of \cite{samorodnitsky:2005a, wang:roy:stoev:2013, roy:2020} for abelian groups to all amenable groups. A rather different proof (more analytic in nature) of the equivalence of (1) and (2) in Theorem \ref{t:mainintro} has  recently been given by Avraham-Re’em \cite{nachi}.

In Section \ref{sec-hyp}, we turn to discrete groups $G$ acting on (Gromov-)hyperbolic spaces $X$, giving rise to an action of $G$ on the boundary $\partial_G X$. We shall be specifically interested in the following cases (See Example \ref{eg-confden}):
\begin{enumerate}
	\item $\partial_G X$ is the Gromov boundary of a
	(Gromov-)hyperbolic group $G$ equipped with the Patterson-Sullivan measure $\mups$.
	\item More generally, we can let $\partial_G X$ be the limit set of a non-elementary group $G$ acting properly by isometries on a hyperbolic space $X$, and equip it with the Patterson-Sullivan measures $\mups$.
	\item A non-elementary hyperbolic group $G$ acting on its Poisson boundary\\ $(\partial_G G,\mu_p)$, where $\mu_p$ denotes the hitting measure of random walks on the Poisson boundary.
	\item Let $\rho$ be an Anosov representation of a 	(Gromov-)hyperbolic group $G$ in the sense of Labourie,    in a semi-simple Lie group $L$.  Equip the limit set $\Lambda$ in the Furstenberg boundary with a `higher rank' Patterson-Sullivan measure $\mups$.
\end{enumerate}

We prove (see Theorem \ref{thm-hyp}):

\begin{theorem}\label{thm-hypintro}
	In all the  above examples,  the associated $G$-indexed $\sas$ random field  is ergodic.
\end{theorem}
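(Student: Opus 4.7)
The plan is to reduce, via Rosi\'nski's representation, ergodicity of $\mathbf{Y}$ to a dynamical property of the underlying non-singular $G$-action on the boundary, and then to establish that property in each case using the rich conformal/hyperbolic dynamics available. Recall that $\mathbf{Y}$ is encoded by a non-singular $G$-action $\{\phi_g\}$ on $(\partial_G X,\nu)$ together with an $L^\alpha$ function and a $\pm 1$-cocycle. A Rosi\'nski--Samorodnitsky-type criterion, the analogue for arbitrary countable $G$ of the implication $(1) \Rightarrow (2)$ in Theorem~\ref{t:mainintro}, asserts that $\mathbf{Y}$ is ergodic provided $\{\phi_g\}$ is null in the appropriate sense, equivalently, provided its Maharam skew-product is ergodic.

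In cases (1) and (2), where $\nu = \mups$ on $\Lambda$, the Radon--Nikodym cocycle of $\{\phi_g\}$ is $e^{-\delta B_g}$ with $B_g$ the Busemann cocycle and $\delta$ the critical exponent. The Maharam skew-product is then naturally identified with a $G$-space carrying the Bowen--Margulis--Sullivan measure on the unit tangent bundle, whose ergodicity is Sullivan's theorem in the Kleinian case and its extensions by Coornaert, Roblin, and Paulin--Pollicott--Schapira in the present generality. Equivalently, the exponential decay of $e^{-\delta B_g}$ along generic orbits forces the null condition on $\{\phi_g\}$ directly.

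For case (3), ergodicity of the $G$-action on $(\partial_G G,\mu_p)$ is Kaimanovich's theorem, and the precise asymptotics of the logarithmic Martin kernel (Blach\`ere--Ha\"iss\-insky--Mathieu, Gou\"ezel) control the Radon--Nikodym cocycle and deliver the null condition. Case (4), the higher-rank Anosov setting, is handled analogously, replacing $\mups$ by Sambarino's higher-rank Patterson--Sullivan construction and Sullivan's theorem by the ergodicity of the associated Gromov geodesic flow (Bridgeman--Canary--Labourie--Sambarino). The double boundary case, with $\mubms$ on $\partial_G X \times \partial_G X$, is the cleanest: the $G$-action is measure-preserving with respect to a $\sigma$-finite measure, so weak mixing of the geodesic flow (Babillot, Roblin) together with the classical equivalence of ergodicity and weak mixing for $\sas$ fields over measure-preserving actions yields the conclusion.

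The principal obstacle is that the Lindenstrauss--Tempelman tempered F\o{}lner machinery central to Theorem~\ref{t:mainintro} is unavailable for non-amenable $G$, so one cannot simply replay the amenable proof. The remedy exploited above is that the target here is ergodicity --- not weak mixing --- and the requisite null/Maharam ergodicity condition can be verified case by case using intrinsically hyperbolic-dynamical inputs: Busemann cocycle decay, Martin kernel asymptotics, and ergodicity of geodesic flows. A secondary technical point, to be checked uniformly, is that the $\pm 1$-cocycle appearing in the Rosi\'nski representation does not introduce spurious invariant functions, which follows from freeness and double ergodicity of the boundary action in each of the four examples.
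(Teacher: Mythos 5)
Your argument has a genuine gap at its foundation: the reduction you invoke does not exist. The implication ``null action $\Rightarrow$ weak mixing/ergodicity'' of Theorem~\ref{t:mainintro} is proved in the paper \emph{only} for amenable $G$, precisely because it rests on the Lindenstrauss--Tempelman pointwise ergodic theorem along tempered F{\o}lner sequences; there is no known ``analogue for arbitrary countable $G$,'' and the paper explicitly lists the relationship between nullity and ergodicity for hyperbolic groups as an open problem. So even if you verify nullity (or stronger dynamical properties) of the boundary actions in Examples~\ref{eg-confden}(1)--(4), you have no theorem that converts this into ergodicity of the $\sas$ field. Two further assertions in your outline are incorrect as stated: nullity of $\{\phi_g\}$ is \emph{not} equivalent to ergodicity of its Maharam skew-product (the latter is a far stronger, type~$III_1$-flavoured condition), and the Maharam extension of the single-boundary action $(\partial_G X,\mups)$ lives on $\partial_G X\times(0,\infty)$ with the Busemann/Radon--Nikodym cocycle --- it is not the unit tangent bundle carrying $\mubms$, so Sullivan-type ergodicity of the geodesic flow is not the statement you would need, and quoting it does not close the argument.

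What the paper actually does is bypass any appeal to ergodicity of the boundary action. The criteria that survive beyond amenability are Lemma~\ref{lemma_gross} and Theorem~\ref{t:mpns}, which hold for \emph{every} countable group and reduce ergodicity (indeed $\mathcal{F}$-mixing, which implies ergodicity for any exhaustion $\mathcal{F}$) to the Ces\`aro decay in Condition~\ref{cond-suff}: averaging over balls $F_m=\{g: d(o,g.o)\le m\}$, one must show that the $\mups$-measure of the set where the Radon--Nikodym derivative of $\phi_g$ is of moderate size tends to zero on average. This is then a purely geometric estimate (Lemma~\ref{lem-mua20}): the set $A(K,o,g)$ where the Busemann cocycle $\beta_\xi(o,g.o)$ is bounded is contained in the preimage, under nearest-point projection to $[o,g.o]$, of a bounded segment around the midpoint, so its measure decays like $e^{-v\,d(o,m_g)}$; a two-scale counting argument over $F_m$ finishes the proof of Theorem~\ref{thm-hyp}. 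If you want to salvage your approach, replace the unsupported ``null $\Rightarrow$ ergodic'' step by a direct verification of Condition~\ref{cond-suff} (equivalently condition~\eqref{condn_gross}) in each example --- which is exactly the quantitative shadow-type estimate above, and requires none of Sullivan, Kaimanovich, or geodesic-flow ergodicity.
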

A similar ergodicity result  (see Proposition \ref{prop-db}) is proven for  a
(Gromov-)hyperbolic group $G$ acting on the double boundary
$\big((\partial_G X \times \partial_G X)\setminus \Delta\big)$ equipped with the Bowen-Margulis-Sullivan measure, where $\Delta$ denotes the diagonal.\\

\noindent {\bf Acknowledgment:}   We are grateful to Nachi Avraham-Reem for kindly bringing his  recent work \cite{nachi} to our notice as this paper was being written up. {We are also grateful to him for extremely helpful comments on  an earlier version of this paper.} In addition to a different proof of the equivalence of (1) and (2) in
Theorem \ref{t:mainintro}, he also establishes that these are equivalent to ergodicity of the associated $\sas$ field indexed by an amenable group. The second and third authors would like to acknowledge the warm hospitality of Tata Institute of Fundamental Research, Mumbai during their extremely productive research visit in April 2022. 

\subsection{Stable random fields indexed by countable groups}\label{sec:sas}

We shall summarize briefly some basic material on  stationary symmetric $\alpha$-stable random fields indexed by a countable group.  A (real-valued) random variable $Y$ is said to follow a {\emph symmetric $\alpha$-stable} ($\sas$) distribution with scale parameter $\sigma \in (0, \infty)$ and tail parameter $\alpha \in (0, 2]$ if its characteristic function (i.e., Fourier transform) is of the form $\mathbb{E}(e^{i\theta Y}) = \exp{\{-\sigma^{\alpha}|\theta|^\alpha\}}$, $\theta \in \mathbb{R}$. Here $\mathbb{E}$ denotes the expectation with respect to the probability measure $\mathbb{P}$. Clearly, the distribution of $Y$ is the same as that of $-Y$, i.e.\ it is  \emph{symmetric}. When  $\alpha =2$, we get a centered normal random variable with exponential tails. Here, our focus is on the non-Gaussian case ($0 < \alpha < 2$). Thus, $\mathbb{P}(Y>t) = \mathbb{P}(Y<-t) \sim ct^{-\alpha}$ as $t \to \infty$. 

Let $G$ be a countably infinite group with identity element $e$.
A random field $\mathbf{Y}=\{Y_g\}_{g\in G}$ indexed by $G$ is called an $\sas$ random field if for each $k \geq 1$,  for all
$g_1, g_2, \ldots, g_k \in G$ and for all  $c_1, c_2, \ldots, c_k \in \mathbb{R}$, the linear combination $\sum_{i=1}^k c_i Y_{g_i}$  follows an $\sas$ distribution, whose scale parameter depends on the coefficients $c_1, c_2, \ldots, c_k \in \mathbb{R}$ as follows (see \cite{samorodnitsky:taqqu:1994} for further details):

\begin{equation}
	\sum_{i=1}^k c_i Y_{g_i} \sim \sas\bigg(\Big\|\sum_{i=1}^k c_i f_{g_i}\Big\|_\alpha\bigg)\,. \label{distn_lin_comb}
\end{equation}
Equivalently, any $\sas$ random field $\mathbf{Y}=\{Y_g\}_{g\in G}$ admits an \emph{integral representation} (also known as a \emph{spectral representation}) of the form
\begin{equation}\label{integrep}
	Y_g \eqd \int_S f_g(x)M(dx), \mbox{ \ \  } g \in G,
\end{equation}
where $M$ is an S$\alpha$S random measure on some $\sigma$-finite standard Borel space $(S,\mathcal{S}, \mu)$, and $f_g \in L^\alpha (S, \mu)$, $g \in G$ are real valued functions~\cite[Theorem~13.1.2]{samorodnitsky:taqqu:1994}. 
Conversely, given any $\sigma$-finite standard Borel space $(S,\mathcal{S}, \mu)$ and a collection of real-valued functions $\{f_g: g \in G\} \subset L^\alpha (S, \mu)$, there exists a stationary $\sas$ random field $\mathbf{Y}=\{Y_g\}_{g \in G}$ satisfying \eqref{distn_lin_comb}. Assume henceforth, without loss of generality, that $\bigcup_{g \in G} \{x \in S: f_g(x) \neq 0\} = S$
holds modulo $\mu$-null sets.

\begin{defn}
	The random field $\{Y_g\}_{g \in G}$ is called {\it left-stationary}
	if $\{Y_g\} \overset{d}{=} \{Y_{hg}\} $ for all $h\in G$, i.e., for all $k \in \mathbb{N}$, for all $g_1, g_2, \ldots, g_k, h \in G$ and for all Borel $B \subseteq \mathbb{R}^k$, 
	$$\mathbb{P}\big[(Y_{g_1},Y_{g_2}, \cdots, Y_{g_k}) \in B \big] = \mathbb{P}\big[(Y_{hg_1},Y_{hg_2}, \cdots, Y_{hg_k}) \in B \big].$$
\end{defn}
\noindent We will simply write stationary to mean left-stationary throughout this paper. 
Note that stationarity of $\mathbf{Y}=\{Y_g\}_{g\in G}$ is equivalent to saying that the left translation action $G \curvearrowright \mathbb{R}^G$ preserves the following probability measure
$$\mathbb{P}_{\mathbf{Y}} = \mbox{\;law of\;} \mathbf{Y}\,:=\mathbb{P}\Big(\big\{\omega \in \Omega: \big(X_g(\omega): g \in G\big) \in \cdot\big\}\Big).$$
In particular, $\big(\mathbb{R}^G, \,  \mathcal{B}_\mathbb{R}^{\otimes G}, \, \mathbb{P}_{\mathbf{Y}}, \, G\big)$ is a probability measure-preserving dynamical system whenever $\mathbf{Y}=\{Y_g\}_{g\in G}$ is stationary. 

By the pioneering work of Rosi{\'n}ski \cite{rosinski:1995, rosinski:2000}, this 
induces a \emph{non-singular} $G$-action as follows. 
Let $(S, \mathcal{S}, \mu)$ be a $\sigma$-finite standard measure space and $G \curvearrowright (S, \mathcal{S}, \mu)$ be a measurable action, i.e., for all $g \in G$, the map $\phi_g: S \to S$ defined by
$
\phi_g: x \mapsto g^{-1}.\,x
$
is measurable. This $G$-action is said to be \emph{measure-preserving} (or \emph{invariant}) if for all $g \in G$ and for all $A \in \mathcal{S}$,
$
g_\ast \mu (A):= \mu \circ \phi_g^{-1}(A) = \mu(A).
$  The action $G \curvearrowright (S, \mathcal{S}, \mu)$
is called \emph{non-singular} (or \emph{quasi-invariant}  or \emph{measure-class preserving}) if for all $g \in G$ and for all $A \in \mathcal{S}$,
$
g_\ast \mu (A)=0 \;\; \mbox{ if and only if } \;\; \mu(A) =0
$ (see \cite{aaronson:1997}). 
Following \cite{rosinski:1995}, we shall refer to the collection  $\{\phi_g:S \to S\}_{g \in G}$ as a non-singular $G$-action throughout this paper. For such an action, a $\pm 1$-valued measurable cocycle $c_g: S \to \{+1, -1\}$, $g \in G$ is another of measurable maps (also indexed by $G$) satisfying the cocycle relation 
$
c_{g_1g_2}(x) = c_{g_2} (x)c_{g_1}(\phi_{g_2}(x))
$
for all $g_1, g_2 \in G$ and for $\mu$-almost all $x \in S$. 
The following key theorem forms the link between stationary $\sas$ random fields and non-singular actions.

\begin{theorem}[\label{omni-sas}\emph{Rosi{\'n}ski Representation}~\cite{rosinski:1995, rosinski:2000}]${}$\\
	(a) For any stationary $\sas$ random field $\mathbf{Y}=\{Y_g\}_{g \in G}$, there exist a standard measure space $(S, \mathcal{S}, \mu)$ equipped with a non-singular group action $\{\phi_g\}_{g \in G}$, a $\pm 1$-valued measurable cocycle $\{c_g\}_{g\in G}$ and a real-valued function $f=f_e \in \LL^\alpha (S,\mu)$ such that $\mathbf{Y}$ admits an integral representation  of the form 
	\begin{equation}\label{eqthm}
		f_g(x)=c_g(x)\left(\frac{d(\mu\circ\phi_g)}{d\mu}(x)\right)^{1/\alpha}\left( f\circ \phi_g\right)(x), \mbox{\ \ } g \in G.
	\end{equation}	
	(b) Conversely, given any standard measure space $(S, \mathcal{S}, \mu)$ endowed with a non-singular group action $\{\phi_g\}_{g \in G}$, a $\pm 1$-valued measurable cocycle $\{c_g\}_{g\in G}$ for $\{\phi_g\}$, and a real-valued function $f=f_e \in \LL^\alpha (S,\mu)$, there exists a stationary $\sas$ random field indexed by $G$ admitting a Rosi{\'n}ski representation \eqref{eqthm}.
\end{theorem}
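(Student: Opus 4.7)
The plan is to prove the two directions separately: the converse (b) by direct verification, and the forward direction (a) by combining the general spectral representation theorem for $\sas$ processes with the Banach--Lamperti classification of linear isometries of $L^\alpha$ spaces for $\alpha \neq 2$.

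For (b), I would first check that each $f_g$ defined by \eqref{eqthm} lies in $L^\alpha(S,\mu)$ with $\|f_g\|_\alpha = \|f\|_\alpha$; since $|c_g|\equiv 1$, a change of variables collapses $\int |f_g|^\alpha\, d\mu = \int |f\circ\phi_g|^\alpha\, d(\mu\circ\phi_g)$ to $\int|f|^\alpha d\mu$. Setting $Y_g := \int_S f_g\, dM$ for an $\sas$ random measure $M$ with control measure $\mu$, and invoking the scale-parameter formula \eqref{distn_lin_comb}, stationarity reduces to checking $\|\sum_i c_i f_{hg_i}\|_\alpha = \|\sum_i c_i f_{g_i}\|_\alpha$ for every $h\in G$ and every finite linear combination. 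The crucial ingredient is a compositional identity expressing $f_{hg}$ as a norm-preserving twist of $f_g$ by $\phi_h$, $c_h$, and $[d(\mu\circ\phi_h)/d\mu]^{1/\alpha}$; this identity is forced by the chain rule for Radon--Nikodym derivatives together with the cocycle relation, and once in hand, a second application of change of variables closes the argument.

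For (a), I would proceed in three steps. First, invoke the general spectral representation theorem for $\sas$ fields (Theorem~13.1.2 of Samorodnitsky--Taqqu, quoted in the excerpt) to obtain some standard $(S,\mathcal{S},\mu)$ and functions $\{f_g\}_{g\in G}\subset L^\alpha(S,\mu)$ with $Y_g \eqd \int f_g\, dM$ and full support $\bigcup_g\{f_g\neq 0\}=S$ modulo $\mu$-null sets. Second, pass to a \emph{minimal} representation in the sense of Hardin--Rosi\'nski, which is essentially unique and ensures that the closed linear span of $\{f_g:g\in G\}$ is all of $L^\alpha(S,\mu)$. Third, translate stationarity into an isometry statement: the assignment $U_h:f_g\mapsto f_{hg}$ extends, by the equality of scale parameters implied by stationarity, to a surjective linear isometry of $L^\alpha(S,\mu)$ onto itself. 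The Banach--Lamperti theorem (valid precisely for $\alpha\neq 2$) then yields, for each $h\in G$, a non-singular transformation $\phi_h$ and a $\pm 1$-valued measurable $c_h$ such that $(U_h\psi)(x)=c_h(x)[d(\mu\circ\phi_h)/d\mu]^{1/\alpha}(x)\psi(\phi_h(x))$. Setting $\psi=f=f_e$ and recognising $U_h f_e = f_h$ delivers exactly \eqref{eqthm}.

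The hard part is then verifying that the families $\{\phi_g\}_{g\in G}$ and $\{c_g\}_{g\in G}$ produced by Banach--Lamperti really assemble into a non-singular $G$-action and a $\pm 1$-valued cocycle for it. This is enforced by the operator identity $U_{hk}=U_h\circ U_k$, which on one hand follows from the definition $U_h f_g = f_{hg}$, and on the other hand, via the uniqueness clause of Banach--Lamperti (itself a consequence of minimality), forces the pointwise identities $\phi_{hk}=\phi_k\circ\phi_h$ and $c_{hk}(x)=c_k(x)c_h(\phi_k(x))$ for $\mu$-almost every $x$. Choosing representatives so that these identities hold simultaneously for all pairs $(h,k)$ outside a single $\mu$-null exceptional set uses countability of $G$ in an essential way; modulo this bookkeeping, the construction is complete.
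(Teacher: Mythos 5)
The paper does not reprove this theorem---it is quoted from Rosi\'nski (1995, 2000)---and your proposal reconstructs essentially the argument of those cited sources: change-of-variables verification for (b), and for (a) the Samorodnitsky--Taqqu spectral representation, passage to a Hardin--Rosi\'nski minimal representation, the Banach--Lamperti form of the surjective $L^\alpha$-isometries $f_g\mapsto f_{hg}$ (valid since $\alpha\neq 2$), and uniqueness plus countability of $G$ to assemble $\{\phi_g\}$ and $\{c_g\}$ into a non-singular action and a $\pm1$-cocycle. This is correct and is the same approach as the proof the paper points to; the only thing to keep straight is the order-of-composition convention for $\phi_{hk}$ and $c_{hk}$ in the non-abelian setting, which is bookkeeping.
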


\section{Conditions for weak mixing}\label{sec-wm}
As discussed in Section~\ref{sec:sas}, a stationary random field $\{Y_g\}_{g \in G}$ (not necessarily $\sas$) induces a measure-preserving dynamical system $\big(\mathbb{R}^G, \,  \mathcal{B}_\mathbb{R}^{\otimes G}, \, \mathbb{P}_{\mathbf{Y}}, \, G\big)$. We say that $\{Y_g\}_{g \in G}$ is ergodic if the induced dynamical system is so, i.e., the (left) translation invariant measurable subsets of $\mathbb{R}^G$ have $\mathbb{P}_{\mathbf{Y}}$ measure $0$ or $1$. Roughly speaking, ergodicity is a very weak form of asymptotic independence of the random field. A slightly stronger notion is that of weak mixing, which is defined below.

\begin{defn} A   stationary random field $\{Y_g\}_{g \in G}$ (not necessarily $\sas$) is called weakly mixing if the dynamical system induced by the diagonal action $G \curvearrowright (\mathbb{R}^G \times \mathbb{R}^G, \,  \mathcal{B}_\mathbb{R}^{\otimes G} \, \otimes \, \mathcal{B}_\mathbb{R}^{\otimes G} , \, \mathbb{P}_{\mathbf{Y}} \otimes \mathbb{P}_{\mathbf{Y}})$ defined by 
	\[
	g.(x_1, x_2) = (g.x_1, g.x_2),\;\;\;\;\;\;\;\;\;\;\;\;\;\; g \in G, \, (x_1, x_2) \in \mathbb{R}^G \times \mathbb{R}^G
	\]	
	is ergodic.
\end{defn}

In this paper, we introduce a new notion of mixing that depends on a sequence $\mathcal{F}=\{F_n: n \geq 1\}$ of increasing exhaustive finite subsets of $G$.

\begin{defn}\label{defn:F_mixing}
	Suppose $G$ is a countable group and $\mathcal{F}=\{F_n: n \geq 1\}$ is a sequence of finite subsets $F_n \uparrow G$. We say that a stationary random field $\{Y_g\}_{g \in G}$ (not necessarily $\sas$) is $\mathcal{F}$ mixing if for all $k \geq 1$, for all $g_1, g_2, \ldots, g_k, h \in G$ and for all Borel $A, B \subset \mathbb{R}^k$,
	\begin{align*}
		&\frac{1}{|F_n|}\sum_{h \in F_n}\Big|\mathbb{P}\big[ (Y_{hg_1}, Y_{hg_2}, \ldots, Y_{hg_k}) \in A , \; (Y_{g_1}, Y_{g_2}, \ldots, Y_{g_k}) \in B \big] \\
		&\;\;\;\;\;\;\;\;\;\;\;\;\;\;\;\;\;\;\;\;\;\;\;\; - \mathbb{P}\big[ (Y_{g_1}, Y_{g_2}, \ldots, Y_{g_k}) \in A \big] \, \mathbb{P}\big[ (Y_{g_1}, Y_{g_2}, \ldots, Y_{g_k}) \in B \big]\Big| \to 0
	\end{align*}
	as $n \to \infty$
\end{defn}

\noindent Clearly, $\mathcal{F}$-mixing (for some $\mathcal{F}$) implies ergodicity; further, it is equivalent to weak mixing when $G$ is amenable and $\mathcal{F}$ is a F{\o}lner sequence; see Theorem~\ref{thm:wm_amenable} below.

Recall that a countable group $G$ is amenable if and only if it admits an increasing F{\o}lner sequence $F_n \uparrow G$, i.e., an increasing sequence of exhausting finite subsets $F_n \subset G$ such that for all $g \in G$,
\[
\lim_{n \to \infty} \frac{|gF_n \, \Delta \, F_n|}{|F_n|} = 0.
\]
Examples of amenable groups include finite groups, abelian groups, groups of sub-exponential growth, solvable groups, lamplighter groups, etc. As mentioned in the introduction, random fields indexed by amenable groups are important in machine learning algorithms for structured and dependent data \cite{austern:orbanz:2022}. 

In order to state the characterization result for weak mixing of stationary $\sas$ random fields indexed by amenable groups, we need to introduce the Neveu decomposition. Let $(S, \mathcal{S}, \mu)$ be as in Theorem \ref{omni-sas}, equipped with a non-singular group action $\{\phi_g\}_{g \in G}$, where $G$ is any countable group. By applying  \cite[Lemma~2.2, Theorem 2.3~(i)]{wang:roy:stoev:2013} in this general setup, we get a partition (known as the Neveu decomposition) $S=\mathcal{P}\cup \mathcal{N}$, where the set $\mathcal{P}$ is the largest (modulo $\mu$) $G$-invariant set where one can have a finite $G$-invariant measure equivalent to $\mu$, and $\mathcal{N}=\mathcal{P}^c$.  The subsets $\mathcal{N}$ and $\mathcal{P}$ of $S$ are known as the \emph{null part} and the \emph{positive part} of $\{\phi_g\}_{g \in G}$, respectively. A measurable set $W \subset S$ is called \emph{weakly wandering} if there exists an infinite subset $L \subseteq G$ such that $\{\phi_g(W): g \in L\}$ is a pairwise disjoint collection. It can be shown that $\mathcal{N}$ is the measurable union (and hence a countable union) of weakly wandering subsets of $S$ while $\mathcal{P}$ has no weakly wandering subset of positive $\mu$-measure; see \cite{aaronson:1997}. It was shown in \cite{samorodnitsky:2005a} that for $G=\mathbb{Z}$, a stationary $\sas$ process is weakly mixing if and only if the underlying non-singular $\mathbb{Z}$-action has no nontrivial positive part. In \cite{wang:roy:stoev:2013}, this result was generalized to $G = \mathbb{Z}^d$ for any $d \in \mathbb{N}$ with the help of \cite{takahashi:1971}.  In Theorem \ref{t:main} below, we shall  
establish this criterion for a stationary $\sas$ random field 
indexed by an arbitrary amenable group. 

Thanks to \cite{dye:1965}, random fields indexed by amenable groups enjoy the following simpler and  useful characterization of weak mixing based on Definition~\ref{defn:F_mixing}. 

\begin{theorem}[\cite{bergelson:gorodnik:2004}, Theorem~1.6] \label{thm:wm_amenable}
	Let $G$ be an amenable group with an increasing F{\o}lner sequence $\mathcal{F}=\{F_n: n \geq 1\}$. Then a $G$-indexed stationary random field (not necessarily $\sas$) $\{Y_g\}_{g \in G}$ is weakly mixing if and only if it is $\mathcal{F}$-mixing. 
\end{theorem}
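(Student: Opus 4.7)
My plan is to prove the equivalence via the classical Hilbert space (Koopman operator) characterization of weak mixing for measure-preserving actions of amenable groups, and then reduce general $L^2$ statements to cylinder-set statements via approximation.

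First, I would recast both properties as statements about the probability measure-preserving system $(\mathbb{R}^G, \mathcal{B}_\mathbb{R}^{\otimes G}, \mathbb{P}_\mathbf{Y}, G)$. The $\mathcal{F}$-mixing condition of Definition~\ref{defn:F_mixing} is exactly the assertion that for every pair of cylinder sets $A, B \subset \mathbb{R}^G$ (i.e.\ sets depending only on finitely many coordinates, obtained by specifying Borel subsets for coordinates $g_1,\dots,g_k$),
\[
\frac{1}{|F_n|}\sum_{h \in F_n}\bigl|\mathbb{P}_\mathbf{Y}(h^{-1}A \cap B) - \mathbb{P}_\mathbf{Y}(A)\mathbb{P}_\mathbf{Y}(B)\bigr| \longrightarrow 0.
\]
Weak mixing, on the other hand, is the ergodicity of the diagonal action on $(\mathbb{R}^G \times \mathbb{R}^G, \mathbb{P}_\mathbf{Y} \otimes \mathbb{P}_\mathbf{Y})$.

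The key intermediate tool is the following Koopman-operator characterization, available for any ergodic measure-preserving action of a countable amenable group $G$ with an increasing F{\o}lner sequence $\mathcal{F} = \{F_n\}$: the action is weakly mixing if and only if for all $f_1, f_2 \in L^2(\mathbb{P}_\mathbf{Y})$,
\[
\frac{1}{|F_n|}\sum_{h \in F_n}\bigl|\langle U_h f_1, f_2\rangle - \textstyle\int f_1\,d\mathbb{P}_\mathbf{Y} \int f_2\,d\mathbb{P}_\mathbf{Y}\bigr| \longrightarrow 0,
\]
where $U_h$ is the Koopman unitary. This is standard: the nontrivial direction uses that the only $U_{h}$-invariant eigenfunctions are constants (equivalent to weak mixing by the van der Corput / Furstenberg Hilbert space argument adapted to amenable groups), together with Dye's theorem to pass from individual F{\o}lner averages to the required $L^1$-Cesàro decay; all of this is available for any amenable $G$.

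For the forward direction ($\Rightarrow$), assume weak mixing. Specializing the Koopman characterization to $f_1 = \mathbf{1}_A$ and $f_2 = \mathbf{1}_B$ with $A, B$ cylinder sets of the form $\{(Y_{g_i})_{i=1}^k \in A\}$ and $\{(Y_{g_i})_{i=1}^k \in B\}$ immediately yields the $\mathcal{F}$-mixing identity of Definition~\ref{defn:F_mixing}, since $\langle U_h \mathbf{1}_A, \mathbf{1}_B\rangle = \mathbb{P}_\mathbf{Y}(h^{-1}A \cap B)$.

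For the converse ($\Leftarrow$), I would upgrade the hypothesis (which only concerns indicators of cylinder sets) to all $L^2$ functions and then invoke the Koopman characterization. The argument is a standard three-$\epsilon$ approximation: simple functions built from cylinder-set indicators are dense in $L^2(\mathbb{P}_\mathbf{Y})$, and unitarity $\|U_h f\|_2 = \|f\|_2$ ensures that the Cesàro averages depend continuously on $f_1, f_2$ in $L^2$ uniformly in $n$, by Cauchy-Schwarz. Together with bilinearity, this lets one extend the convergence from cylinder indicators to arbitrary $L^2$ pairs. The main technical care is to check that the $L^1$-type Cesàro averages (rather than just the signed ones) survive the approximation; this follows since $|\langle U_h f_1, f_2\rangle - \int f_1\int f_2|$ is controlled uniformly in $h$ by the $L^2$ norms of the approximation errors.

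The step I expect to be the main obstacle is establishing the amenable-group Koopman characterization itself, namely the equivalence of weak mixing with the $L^1$-Cesàro decay of correlations along $\mathcal{F}$; this requires the amenable analogue of the van der Corput lemma and is where the F{\o}lner property is essentially used. Given that tool, the remainder of the argument is bookkeeping via approximation in $L^2$.
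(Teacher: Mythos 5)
Your proposal is correct and follows essentially the same route as the paper, whose ``proof'' is simply the citation of the Bergelson--Gorodnik/Dye characterization of weak mixing for measure-preserving actions of amenable groups (the $L^1$-Ces\`aro decay of correlations along F{\o}lner sequences), i.e.\ exactly the Koopman-operator characterization you defer to, with the cylinder-indicator-to-$L^2$ density argument being the routine bridge to Definition~\ref{defn:F_mixing}. The only small caution is that for non-abelian amenable $G$ the ``no nonconstant eigenfunctions'' criterion should be phrased as the absence of nontrivial finite-dimensional invariant subspaces of the Koopman representation on $L^2_0$, but this is internal to the standard result that both you and the paper invoke.
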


\begin{definition}
	Assume that $\mathcal{F}=\{F_n: n \geq 1\}$ is an increasing sequence of finite subsets $F_n \uparrow G$ (not necessarily satisfying the F{\o}lner condition). We say that a set $E \subset G$ has $\mathcal{F}$-density zero if 
	\[
	\lim_{n \to \infty} \frac{|E \cap F_n|}{|F_n|} =0.
	\]
\end{definition}

Then the proof of  \cite[Lemma~6.2]{petersen:1983} gives the following. (In the statement, `$g \to \infty,\,g \notin E$' simply means that $g$ stays away from $E$ and eventually escapes any finite subset.)

\begin{lemma}[Koopman-von Neumann] \label{lemma:Kv}
	Let $\psi: G \to [0,\infty)$ be a bounded function. Then $\displaystyle{\lim_{n \to \infty} \frac{1}{|F_n|} \sum_{g \in F_n} \psi(g) =0}$ if and only if there exists $E \subset G$ of $\mathcal{F}$-density zero such that $\displaystyle{\lim_{g \to \infty, g \notin E} \psi(g)=0}$. 
\end{lemma}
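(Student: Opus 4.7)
The plan is to adapt the classical Koopman-von Neumann argument (essentially the proof of \cite[Lemma~6.2]{petersen:1983}) from Cesàro averages along $[1,N]$ to averages along the exhausting sequence $\mathcal{F} = \{F_n\}$. Throughout, note that $|F_n| \to \infty$ since $G$ is countably infinite and $F_n \uparrow G$, so $|K|/|F_n| \to 0$ for any finite $K \subset G$.

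For the $(\Leftarrow)$ direction, suppose $E$ has $\mathcal{F}$-density zero and $\psi(g) \to 0$ as $g \to \infty$ off $E$. Given $\epsilon > 0$, pick a finite $K \subset G$ with $\psi(g) < \epsilon$ for all $g \notin E \cup K$, and let $M$ be a uniform bound on $\psi$. Partitioning $F_n = (F_n \cap E) \sqcup (F_n \cap (K \setminus E)) \sqcup (F_n \setminus (E \cup K))$ yields
$$\frac{1}{|F_n|}\sum_{g \in F_n} \psi(g) \;\leq\; M \cdot \frac{|E \cap F_n|}{|F_n|} \;+\; M \cdot \frac{|K|}{|F_n|} \;+\; \epsilon,$$
whose $\limsup$ is at most $\epsilon$. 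Letting $\epsilon \downarrow 0$ finishes this direction.

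For the more substantive $(\Rightarrow)$ direction, I would introduce the nested level sets $E_k := \{g \in G : \psi(g) \geq 1/k\}$, so that $E_1 \subseteq E_2 \subseteq \cdots$. Markov's inequality gives $|E_k \cap F_n|/|F_n| \leq k \cdot |F_n|^{-1} \sum_{g \in F_n} \psi(g)$, so each $E_k$ has $\mathcal{F}$-density zero by hypothesis. A diagonal choice then produces strictly increasing indices $n_1 < n_2 < \cdots$ with $|E_k \cap F_n|/|F_n| < 1/k$ for all $n \geq n_k$, and one defines
$$E \;:=\; \bigcup_{k \geq 1} E_k \cap (F_{n_{k+1}} \setminus F_{n_k}).$$

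Two easy checks finish the argument. First, any $g \notin E \cup F_{n_1}$ lies in $F_{n_{k+1}} \setminus F_{n_k}$ for a unique $k \geq 1$, and since $g \notin E$ we have $g \notin E_k$, whence $\psi(g) < 1/k$; as $g$ escapes every $F_{n_k}$ the corresponding $k(g) \to \infty$, giving $\psi(g) \to 0$. Second, for $n_k \leq n < n_{k+1}$, the nesting $E_j \subseteq E_k$ for $j \leq k$ yields $E \cap F_n \subseteq E_k \cap F_n$, so $|E \cap F_n|/|F_n| < 1/k$ and hence tends to $0$ as $n \to \infty$. The one subtle point is exploiting the monotonicity of the $E_k$'s to collapse what would otherwise be a sum of $k$ error terms into a single bound $|E_k \cap F_n|$; without this observation, the naive telescoping estimate on $|E \cap F_n|$ need not tend to zero.
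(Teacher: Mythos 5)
Your proof is correct and follows essentially the same route as the paper, which simply invokes the proof of Petersen's Lemma~6.2 verbatim with the exhausting sequence $\{F_n\}$ in place of intervals: the Markov bound on the level sets $E_k=\{\psi\ge 1/k\}$, their nestedness, and the block construction $E=\bigcup_k E_k\cap(F_{n_{k+1}}\setminus F_{n_k})$ are exactly the classical Koopman--von Neumann argument adapted to this setting. Both directions, including the key observation that $E\cap F_n\subseteq E_k\cap F_n$ for $n_k\le n<n_{k+1}$, check out.
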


In order to use the ergodic theorem of \cite{tempelman:2015}, we shall further need to choose the F{\o}lner sequence to be tempered (in the sense of \cite{lindenstrauss:2001}) as defined below. 

\begin{defn}
	A sequence of finite sets $F_n \subset G$ is called tempered if there exists $C \in (0, \infty)$ such that for all $n \geq 2$, the Shulman condition
	\[
	\left|\bigcup_{k=1}^{n-1} F_k^{-1} F_n\right| \leq C |F_n|.
	\]
	holds.
\end{defn}
\noindent It was shown in \cite[Proposition~1.4, Theorem~1.2]{lindenstrauss:2001}  that every amenable group admits an increasing tempered F{\o}lner sequence along which pointwise ergodic theorem holds.  
In addition to the work of \cite{lindenstrauss:2001} (and its refinement \cite{tempelman:2015})  mentioned above, we also need an extension of a result from \cite{gross:1994}. The statement is essentially the countable groups version of   \cite[Equation~(3.1)]{samorodnitsky:2005a}.

\begin{lemma}\label{lemma_gross} Let $G$ be any countable group and  $\mathcal{F}=\{F_n: n \geq 1\}$ be a sequence of finite subsets $F_n \uparrow G$. Then a $G$-indexed stationary $\sas$ random field $\mathbf{Y}=\{Y_g\}_{g\in G}$, with a Rosi{\'n}ski representation of the form \eqref{eqthm}, is $\mathcal{F}$-mixing if and only if for all $\delta >0$ and for all $\epsilon > 0$, 
	\begin{equation}
		\lim_{n \to \infty} \frac{1}{|F_n|} \sum_{g \in F_n} \mu\big(x \in S:\delta\leq |f_e(x)| \leq \delta^{-1},\, |f_g(x)| \geq \epsilon\big) =0. \label{condn_gross}
	\end{equation}
	In particular, if $G$ is a countable amenable group and $\mathcal{F}=\{F_n: n \geq 1\}$ is an increasing F{\o}lner sequence, then \eqref{condn_gross} is equivalent to weak mixing of $\{Y_g\}_{g \in G}$. 
\end{lemma}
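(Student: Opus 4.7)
The plan is to follow the strategy developed by Gross~\cite{gross:1994} and Samorodnitsky~\cite{samorodnitsky:2005a} for $G=\Z$: translate the probabilistic $\mathcal{F}$-mixing statement into a Cesàro convergence of a functional of the spectral functions $f_g$, then use an elementary analytic inequality together with a truncation to reveal the level-set measure in \eqref{condn_gross}. The extension from $\Z$ to arbitrary countable $G$ should be essentially bookkeeping, because the calculations are pointwise in $(S,\mu)$ and the group element $h$ enters only through the $L^\alpha$-isometry built into \eqref{eqthm}.

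I would begin with the joint characteristic function. For any finite tuple $g_1,\ldots,g_k\in G$ and real coefficients $(\theta_i),(\theta_i')$, setting $u_h:=\sum_i\theta_i f_{hg_i}$ and $v:=\sum_i\theta_i' f_{g_i}$, the difference between the joint characteristic function of $(Y_{hg_i},Y_{g_i})_i$ and the product of its marginals equals, by \eqref{distn_lin_comb},
$$e^{-\|u_h\|_\alpha^\alpha-\|v\|_\alpha^\alpha}\Bigl(e^{\|u_h\|_\alpha^\alpha+\|v\|_\alpha^\alpha-\|u_h+v\|_\alpha^\alpha}-1\Bigr),$$
and $\|u_h\|_\alpha$ is independent of $h$ by the $L^\alpha$-isometry in \eqref{eqthm}. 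Standard inversion estimates (continuity of $\sas$ laws plus a Fourier truncation) then reduce $\mathcal{F}$-mixing to the Cesàro condition
$$\frac{1}{|F_n|}\sum_{h\in F_n}\bigl|\|u_h+v\|_\alpha^\alpha-\|u_h\|_\alpha^\alpha-\|v\|_\alpha^\alpha\bigr|\longrightarrow 0$$
for all such $u,v$. The pointwise inequality, valid for $0<\alpha<2$ and $|a|,|b|\leq M$,
$$c_{\alpha,M}\min(|a|,|b|)^\alpha\leq \bigl||a+b|^\alpha-|a|^\alpha-|b|^\alpha\bigr|\leq C_{\alpha,M}\min(|a|,|b|)^\alpha,$$
combined with a truncation to $\{\delta\leq |u_h|\leq\delta^{-1}\}$ (with tails controlled uniformly in $h$ by the isometry) and the trivial bound elsewhere, then bounds the integrand two-sidedly, up to errors vanishing as $\delta\downarrow 0$, by $\mu(\{\delta\leq|u_h|\leq\delta^{-1},|v|\geq\epsilon\})$. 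A density approximation in the spectral functions reduces the general $(u,v)$ to the single pair $(f_h,f_e)$, yielding \eqref{condn_gross}.

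To pass between Cesàro and pointwise statements in both directions I would invoke Lemma~\ref{lemma:Kv}: the bounded function $\psi_{\delta,\epsilon}(g):=\mu(\{\delta\leq|f_e|\leq\delta^{-1},|f_g|\geq\epsilon\})$ has vanishing Cesàro average on $\mathcal{F}$ iff it tends to $0$ off a set of $\mathcal{F}$-density zero, and a diagonal selection over $(\delta,\epsilon)\in\Q_+^2$ consolidates the exceptional sets. The amenable-group conclusion then follows immediately by combining the established equivalence with Theorem~\ref{thm:wm_amenable}.

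The principal obstacle I expect is the analytic step: keeping the two-sided inequality uniform in $h\in F_n$ and in arbitrary finitely-supported linear combinations $u,v$ is delicate for $\alpha$ close to $2$, and requires the careful truncation-plus-density argument from~\cite[Section~3]{samorodnitsky:2005a}. Once that is in hand, no new group-theoretic input beyond \eqref{eqthm} and the Koopman--von Neumann Lemma~\ref{lemma:Kv} is needed, which is precisely why the $\Z$-case proof extends verbatim to arbitrary countable $G$.
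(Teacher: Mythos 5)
Your overall skeleton coincides with the route the paper takes: the paper's proof of the first assertion is precisely an appeal to the techniques of the proof of Proposition~4.2 in \cite{wang:roy:stoev:2013} (which in turn follow Gross and Samorodnitsky: pass to joint characteristic functions, exploit the isometry built into \eqref{eqthm}, truncate, and use the Koopman--von Neumann Lemma~\ref{lemma:Kv} to move between Ces\`aro convergence and convergence off a set of $\mathcal{F}$-density zero), and the amenable statement is then read off from Theorem~\ref{thm:wm_amenable}, exactly as you propose. The problem is that the engine you supply for the crucial step --- passing between the discrepancy $\bigl|\|u_h+v\|_\alpha^\alpha-\|u_h\|_\alpha^\alpha-\|v\|_\alpha^\alpha\bigr|$ and the truncated level-set measures --- is a false inequality. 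The claimed two-sided bound $c_{\alpha,M}\min(|a|,|b|)^\alpha\le\bigl||a+b|^\alpha-|a|^\alpha-|b|^\alpha\bigr|\le C_{\alpha,M}\min(|a|,|b|)^\alpha$ fails on the range $1\le\alpha<2$: at $\alpha=1$ with $a=b=1$ the middle term is $0$ while $\min(|a|,|b|)^\alpha=1$, so no lower bound exists; and for $1<\alpha<2$ with $a=1$ and $b\downarrow 0$ the middle term is $\sim\alpha b$, which is not $O(b^\alpha)$, so no upper bound of the claimed form exists either. (The two-sided bound is correct only for $0<\alpha<1$.) The upper-bound failure is fatal for the direction ``\eqref{condn_gross} implies $\mathcal{F}$-mixing'', and the difficulty is not, as you suggest, uniformity in $h$ for $\alpha$ near $2$: it begins exactly at $\alpha=1$.

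The missing idea is the one the cited proofs actually use: the deviation of the joint characteristic function of the $\sas$ pair from product form is controlled, in both directions, through the bivariate L\'evy measure of $(Y_e,Y_g)$, i.e.\ through integral quantities of the type $\int_S\min(|u_h|,|v|)^\alpha\,d\mu$ (equivalently, after the truncation $\delta\le|f_e|\le\delta^{-1}$, through the measures appearing in \eqref{condn_gross}), with the regimes $\alpha<1$, $\alpha=1$, $1<\alpha<2$ requiring separate, sign-sensitive estimates rather than a single pointwise comparison against $\min(|a|,|b|)^\alpha$. This is the content of Gross's argument and of the appendix of \cite{wang:roy:stoev:2013} that the paper invokes, and it is the actual crux of the lemma; your use of the isometry in \eqref{eqthm} and the re-indexing $hg_i$ (so that no new group-theoretic input is needed for general countable $G$), your use of Lemma~\ref{lemma:Kv} with a diagonal over rational $(\delta,\epsilon)$, and your deduction of the amenable case from Theorem~\ref{thm:wm_amenable} are all fine, but the analytic bridge has to be replaced by these L\'evy-measure estimates before the proof is complete.
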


\begin{proof}
	The first part follows from  the techniques of \cite{wang:roy:stoev:2013} (see the proof of Proposition $4.2$ in the appendix therein) with the help of Lemma~\ref{lemma:Kv} above. The second part follows from the first one using Theorem~\ref{thm:wm_amenable}. 
\end{proof}

\section{Truncation and Maharam Extension}\label{sec-mptons}
In this section, we apply a probabilistic truncation argument along with Maharam skew product to establish that it is enough to check condition \eqref{condn_gross} for indicator functions $f$  and measure-preserving $G$-actions $\{\phi_g\}$ (see Theorem \ref{t:mpns} for a precise statement) in \eqref{eqthm}. Throughout, we assume that $G$ is any countably infinite group and  $\mathcal{F}=\{F_n: n \geq 1\}$ is a sequence of finite subsets $F_n \uparrow G$. The results of this section do not depend on the amenability of the group $G$ and hold for all countable groups.

Before we can move to the main theorem of this section, we shall need a series of lemmas. We assume that the stationary $\sas$ random field $\mathbf{Y}=\{Y_g\}_{g \in G}$ has an integral representation of the form 
\[Y_g \eqd \int_S f_g(x)M(dx), \mbox{ \ \  } g \in G,\]
where $M$ is an $\sas$ random measure on some $\sigma$-finite standard Borel space $(S,\mathcal{S}, \mu)$ and $f_g$ is given by \eqref{eqthm}, for some non-singular group action $\{\phi_g\}_{g \in G}$, a $\pm 1$-valued measurable cocycle $\{c_g\}_{g\in G}$ and a real-valued function $f = f_e \in \LL^\alpha (S,\mu)$. 

First observe that, without loss of generality, we can assume that $\mu$ is a probability measure. For if not, we can always get a probability measure $\nu$ equivalent to the sigma-finite measure $\mu$ and then the random field $\mathbf{Y}=\{Y_g\}_{g \in G}$ has an integral representation of the form 
\[Y_g \eqd \int_S h_g(x)N(dx), \mbox{ \ \  } g \in G,\]
where $N$ is an S$\alpha$S random measure on $(S,\mathcal{S}, \nu)$ and 
\[h(s)=f(s)\left(\frac{d\mu}{d\nu}(s)\right)^{1/\alpha}\in \LL^\alpha (S,\nu),\]
and $h_g$ is as given in \eqref{eqthm} with $h,\nu$ in place of $f,\mu$. Henceforth, we assume in this section that $\mu$ is a probability measure. Throughout, we fix $\delta>0$ and for simplicity of notation, we let
\[A:=\{x\in S:\delta\le|f_e(x)|\le \delta^{-1}\}\,. \]
Then, we have the following lemma that shows that it is enough to check \eqref{condn_gross} for $f$ bounded. 

\begin{lemma}\label{l:bd} Let $\mu$ be a probability measure. Then for any $\e>0$,
	\begin{eqnarray*}
		&&\limsup_{n } \frac{1}{|F_n|} \sum_{g \in F_n} \mu \left(x\in A:|f\circ \phi_g(x)|^\alpha\frac{d(\mu\circ\phi_g)}{d\mu}(x)>\e \right)\\
		&\le& \limsup_{L\to \infty}\limsup_{n \to \infty} \frac{1}{|F_n|} \sum_{g \in F_n} \mu \left(x\in A:|f\ind_{|f|\le L}\circ \phi_g(x)|^\alpha\frac{d(\mu\circ\phi_g)}{d\mu}(x)>\e/2 \right).
	\end{eqnarray*}
	In particular, if \eqref{condn_gross} holds for all $f$ bounded, then \eqref{condn_gross} holds for all $f\in \LL^{\alpha}(S,\mu)$. 
\end{lemma}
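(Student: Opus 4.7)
The plan is to truncate $f$ at height $L$ and control the excess via Markov's inequality combined with the change-of-variables identity that underlies the Rosi\'nski representation. First I write $f = f\ind_{|f|\le L} + f\ind_{|f|>L}$; since the two summands have disjoint supports, the additivity of $|\cdot|^\alpha$ on disjoint supports gives pointwise
$$|f\circ\phi_g|^\alpha \tfrac{d(\mu\circ\phi_g)}{d\mu} = |f\ind_{|f|\le L}\circ\phi_g|^\alpha \tfrac{d(\mu\circ\phi_g)}{d\mu} + |f\ind_{|f|>L}\circ\phi_g|^\alpha \tfrac{d(\mu\circ\phi_g)}{d\mu}.$$
Hence by the elementary set-inclusion $\{u+v>\e\}\subset\{u>\e/2\}\cup\{v>\e/2\}$, the $\mu$-measure on the left of the claimed inequality is dominated by the sum of the $\mu$-measures of the two pieces. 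One piece is precisely the expression appearing on the right after taking $\limsup_n$; the other is a ``large values'' term that I must show vanishes uniformly in $g$ as $L\to\infty$.

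For this tail estimate, I apply Markov's inequality and then the Rosi\'nski change-of-variables identity $\int (h\circ\phi_g)\,\frac{d(\mu\circ\phi_g)}{d\mu}\,d\mu = \int h\,d\mu$ (which is exactly the statement that $|f_g|$ and $|f|$ have the same $L^\alpha(\mu)$-norm) with $h = |f\ind_{|f|>L}|^\alpha$. This yields
$$\mu\!\left(x\in A : |f\ind_{|f|>L}\circ\phi_g(x)|^\alpha \tfrac{d(\mu\circ\phi_g)}{d\mu}(x) > \tfrac{\e}{2}\right) \;\le\; \tfrac{2}{\e}\int_{\{|f|>L\}}|f|^\alpha\,d\mu,$$
uniformly in $g\in G$. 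Averaging over $g\in F_n$ preserves this bound, so taking $\limsup_n$ followed by $L\to\infty$, together with dominated convergence (using $f\in L^\alpha(S,\mu)$), kills the tail and yields the claimed inequality.

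For the ``in particular'' clause, assume \eqref{condn_gross} holds for every bounded $f$, and fix $\delta,\e>0$. For any $L\ge\delta^{-1}$ the truncated function $f\ind_{|f|\le L}$ coincides with $f$ on $A = \{x : \delta\le|f(x)|\le\delta^{-1}\}$, so $A$ equals the analogous exceptional set $\{x : \delta\le|f\ind_{|f|\le L}(x)|\le\delta^{-1}\}$ associated with the bounded function $f\ind_{|f|\le L}$. Applying the hypothesis \eqref{condn_gross} to this bounded function with parameters $\delta$ and $(\e/2)^{1/\alpha}$ therefore forces the inner $\limsup_n$ on the right-hand side of the main inequality to vanish for every $L\ge\delta^{-1}$, so the outer $\limsup_L$ vanishes \emph{a fortiori}. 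The first part of the lemma then forces the left-hand side to vanish, which is precisely \eqref{condn_gross} for $f\in L^\alpha(S,\mu)$.

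The one delicate bookkeeping point is that the exceptional set $A$ on both sides of the inequality is defined through the untruncated $f$, not its truncation; this is why the tail estimate must be \emph{uniform in} $g$ (obtained via Markov plus change-of-variables, rather than a pointwise bound), and it is also why the deduction of the ``in particular'' clause requires taking $L\ge\delta^{-1}$ so that truncation is inert on $A$.
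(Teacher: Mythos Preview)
Your proof is correct and follows essentially the same route as the paper: split $f$ into its truncation and tail, bound the tail contribution uniformly in $g$ via Markov's inequality plus the change-of-variables $\int(h\circ\phi_g)\frac{d(\mu\circ\phi_g)}{d\mu}\,d\mu=\int h\,d\mu$, and kill it with dominated convergence. Your additional remarks---the disjoint-support additivity of $|\cdot|^\alpha$ and the observation that $L\ge\delta^{-1}$ makes truncation inert on $A$ for the ``in particular'' clause---are correct refinements that the paper leaves implicit.
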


\begin{proof} For any fixed $L>0$, we can write $f=f\ind_{|f|\le L}+f\ind_{|f|>L}$. So, for any fixed $\epsilon>0$, 
	\begin{eqnarray*}
		\mu(x\in A:|f_g(x)|^\alpha>\e)&\le& \mu\left(x\in A:|f\ind_{|f|\le L}\circ \phi_g(x)|^\alpha\frac{d(\mu\circ\phi_g)}{d\mu}(x)>\e/2\right)\\
		&&+\mu\left(x\in A:|f\ind_{|f|> L}\circ \phi_g(x)|^\alpha\frac{d(\mu\circ\phi_g)}{d\mu}(x)>\e/2\right)
	\end{eqnarray*}
	Now, we can approximate the second term on the right-hand side as follows.
	\begin{eqnarray*}
		&&\mu\left(x\in A:|f\ind_{|f|> L}\circ \phi_g(x)|^\alpha\frac{d(\mu\circ\phi_g)}{d\mu}(x)>\e/2\right)\\
		&\le&\int\ind\left(|f\circ \phi_g(x)|> L,|f\circ \phi_g(x)|^\alpha\frac{d(\mu\circ\phi_g)}{d\mu}(x)>\e/2\right)d\mu\\
		&\le&\frac{2}{\e}\int|f\circ \phi_g(x)|^\alpha\frac{d(\mu\circ\phi_g)}{d\mu}(x)\ind\left(|f\circ \phi_g(x)|> L\right)d\mu\\
		&=&\frac{2}{\e}\int|f\circ \phi_g(x)|^\alpha\ind\left(|f\circ \phi_g(x)|> L\right)d\mu\circ\phi_g\\
		&=&\frac{2}{\e}\int|f(x)|^\alpha\ind\left(|f(x)|> L\right)d\mu.\end{eqnarray*}
	Thus, 
	\begin{eqnarray*}
		\sup_{n } \frac{1}{|F_n|} \sum_{g \in F_n} \mu \left(x\in A:|f\ind_{|f|> L}\circ \phi_g(x)|^\alpha\frac{d(\mu\circ\phi_g)}{d\mu}(x)>\e/2 \right)\\
		\le \frac{2}{\e}\int|f(x)|^\alpha\ind\left(|f(x)|> L\right)d\mu\rightarrow 0
	\end{eqnarray*}
	as $L\to \infty$ by the dominated convergence theorem, since $f\in \LL^{\alpha}(S,\mu)$. From this, the lemma follows. 
\end{proof}
Henceforth, we assume that $f$ is bounded, that is, there exists some $K>0$ such that $|f(x)|\le K$ for all $x\in S$. In the next lemma, we show that we can assume that the Radon-Nikodym derivatives are bounded away from $0$ and infinity.

\begin{lemma}\label{l:radbd} Let $\mu$ be a probability measure and $|f|\le K$ for some $K>0$. Then
	\begin{align}
		&\sup_{n } \frac{1}{|F_n|} \sum_{g \in F_n} \mu \bigg(x\in A:|f\circ \phi_g(x)|^\alpha\frac{d(\mu\circ\phi_g)}{d\mu}(x)>\e, \label{e:toshow} \\ &\;\;\;\;\;\;\;\;\;\;\;\;\;\;\;\;\;\;\;\;\;\;\;\;\;\;\;\;\;\;\;\;\;\;\;\;\;\;\;\;\;\;\;\;\;\;\;\;\;\;\;\;\;\;\;\;\;\;\;\;\;\;\;\;\;\;\frac{d(\mu\circ\phi_g)}{d\mu}(x)\not\in [L^{-1},L] \bigg)\to 0 \nonumber
	\end{align}
	as $L\to \infty$.
\end{lemma}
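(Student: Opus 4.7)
The plan is to split the event in \eqref{e:toshow} according to whether the Radon--Nikodym derivative $h_g(x) := \frac{d(\mu\circ\phi_g)}{d\mu}(x)$ exceeds $L$ or falls below $L^{-1}$, and to bound each piece uniformly in $g \in G$. The argument will lean on the two standing reductions in force at this point in the section: $\mu$ is a probability measure and $|f|\le K$ pointwise.

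For the tail piece $\{h_g > L\}$, I would first observe that because each $\phi_g$ is a bimeasurable bijection modulo $\mu$-null sets (its inverse being $\phi_{g^{-1}}$) and $\mu$ is a probability measure, the measure $\mu\circ\phi_g$ is also a probability measure. Hence $\int h_g\, d\mu = (\mu\circ\phi_g)(S) = 1$, and Markov's inequality gives $\mu(h_g > L) \le 1/L$, a bound that is uniform in $g$. Consequently this portion contributes at most $1/L$ to the $F_n$-average, independently of $n$.

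For the lower piece $\{h_g < L^{-1}\}$, the pointwise bound $|f|\le K$ (hence $|f\circ\phi_g|\le K$ everywhere) gives $|f\circ\phi_g(x)|^\alpha h_g(x) < K^\alpha L^{-1}$ on this set. Once $L > K^\alpha/\e$, this is strictly less than $\e$, so the constraint $|f\circ\phi_g|^\alpha h_g > \e$ is vacuous and the event is empty, contributing exactly $0$.

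Combining, for every $L > K^\alpha/\e$ the supremum in \eqref{e:toshow} is bounded by $1/L$, which tends to $0$ as $L \to \infty$. I do not expect a real obstacle here: the argument is essentially Markov plus a pointwise bound on $f$, with the substantive reductions already handled in Lemma \ref{l:bd}. The only subtle point is the uniform identity $\int h_g\, d\mu = 1$, which is precisely why reducing to a probability measure at the start of the section was necessary.
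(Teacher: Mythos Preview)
Your argument is correct and mirrors the paper's proof essentially line for line: both split according to $h_g>L$ versus $h_g<L^{-1}$, bound the first piece by $1/L$ via Markov's inequality and $\int h_g\,d\mu=1$, and observe that the second piece is empty once $L>K^\alpha/\e$ because $|f\circ\phi_g|^\alpha h_g\le K^\alpha L^{-1}<\e$. Your remark on why the reduction to a probability measure is needed (to make $\int h_g\,d\mu=1$ uniformly in $g$) is exactly the point.
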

\begin{proof} When $\frac{d(\mu\circ\phi_g)}{d\mu}(x)$ is large, we have
	\begin{eqnarray*}
		&&\mu \left(x\in A:|f\circ \phi_g(x)|^\alpha\frac{d(\mu\circ\phi_g)}{d\mu}(x)>\e, \frac{d(\mu\circ\phi_g)}{d\mu}(x)>L \right)\\
		&\le& \mu \left( \frac{d(\mu\circ\phi_g)}{d\mu}(x)>L \right)\\
		&\le&\frac{1}{L}\int \frac{d(\mu\circ\phi_g)}{d\mu}(x) d\mu\\
		&=&\frac{1}{L}\int d\mu\circ\phi_g(x)=\frac{1}{L}\,,
	\end{eqnarray*}
	where we have used the Markov inequality in the third line and the last equality uses the fact that $\mu$ is a probability measure. On the other hand, by choosing $L$ large enough so that $L>\e^{-1}K^\alpha$, we have
	\begin{eqnarray*}
		&&\mu \left(x\in A:|f\circ \phi_g(x)|^\alpha\frac{d(\mu\circ\phi_g)}{d\mu}(x)>\e, \frac{d(\mu\circ\phi_g)}{d\mu}(x)<L^{-1} \right)\\
		&\le& \mu \left( \frac{d(\mu\circ\phi_g)}{d\mu}(x)>\e K^{-\alpha}, \frac{d(\mu\circ\phi_g)}{d\mu}(x)<L^{-1} \right)=0\,,\end{eqnarray*}
	where we used that $|f|\le K$. Thus, for $L>\e^{-1}K^\alpha$, the quantity in \eqref{e:toshow} is at most $L^{-1}$, which goes to $0$ as $L\to \infty$. This proves the lemma. 
\end{proof}

The next lemma converts the problem of checking condition \eqref{condn_gross} for non-singular actions to that for measure-preserving actions. First, we need to recall the Maharam extension (see, e.g., Chapter 3.4 of \cite{aaronson:1997}). For the non-singular map $\phi_g$ of $G$ acting on $(S,\mu)$, let
\[w_g(x):=\frac{d\mu\circ\phi_g}{d\mu}(x),\qquad g\in G, x\in S. \]
Then the group action $\phi_g^*$ of $G$ on $(S\times (0,\infty), \mu\otimes\mbox{Leb})$ defined, for each $g \in G$, as
\begin{equation}\label{e:Maharram}
	\phi_g^*(x,y):=\left(\phi_g(x),\frac{y}{w_g(x)}\right), \qquad x\in S, \,y>0
\end{equation}
preserves the measure $\mu\otimes\mbox{Leb}$; see \cite{maharam:1964}.

\begin{lemma}\label{l:final} Let $\mu$ be a probability measure and $|f|\le K$ for some $K>0$. For any $L>1$, define $h^{(L)}\in \LL^\alpha(S\times (0,\infty), \mu\otimes\mbox{Leb})$ as
	\[h^{(L)}(x,y):=\frac{|f(x)|}{y^{1/\alpha}}\ind\left(\frac{1}{2L}\le y\le 2L\right)\,, \qquad x\in S,\, y>0\,.\]
	Then for any $\e>0$,
	\begin{eqnarray}\label{e:toshow2}
		&&\limsup_{n } \frac{1}{|F_n|} \sum_{g \in F_n} \mu \left(x\in A:|f\circ \phi_g(x)|^\alpha\frac{d(\mu\circ\phi_g)}{d\mu}(x)>\e \right)\\
		&\le& \frac{2}{3}\limsup_{L\to \infty}\limsup_{n \to \infty} \frac{1}{|F_n|} \sum_{g \in F_n} \mu\otimes\mbox{Leb} \left((x,y)\in A':|h^{(L)}\circ\phi_g^*(x,y)|^\alpha>\e/2 \right)\,,\nonumber
	\end{eqnarray}
	where
	\[A':=\{(x,y)\in S\times(0,\infty):2^{-1}\delta\le|h^{(L)}(x,y)|\le 2\delta^{-1}\}\,. \]
\end{lemma}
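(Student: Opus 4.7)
The strategy is to apply Lemma \ref{l:radbd} to restrict attention to $x$ with a well-controlled Radon-Nikodym derivative, then use Fubini on the product space $S \times (0,\infty)$ to pass from the LHS $\mu$-measure to the RHS product measure by computing an explicit $y$-slice. The key calculation is that the Maharam action $\phi_g^*$ acts on the $y$-variable by multiplication by $1/w_g(x)$, so that $|h^{(L)}\circ\phi_g^*(x,y)|^\alpha=\frac{|f\circ\phi_g(x)|^\alpha w_g(x)}{y}\,\ind\!\left(\tfrac{w_g(x)}{2L}\le y\le 2Lw_g(x)\right)$. Thus the LHS condition $|f\circ\phi_g(x)|^\alpha w_g(x)>\epsilon$ translates into an inequality on $y$ on the Maharam side.

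\textbf{Step 1 (Reduction using Lemma \ref{l:radbd}).} Write the LHS of \eqref{e:toshow2} as the sum over $\{w_g(x)\in[L^{-1},L]\}$ and its complement. Lemma \ref{l:radbd} forces the complementary contribution to vanish as $L\to\infty$, so it suffices to bound
\[\limsup_n\frac{1}{|F_n|}\sum_{g\in F_n}\mu\bigl(x\in A:\,|f\circ\phi_g|^\alpha w_g>\epsilon,\;w_g(x)\in[L^{-1},L]\bigr)\]
for each fixed (large) $L$.

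\textbf{Step 2 (Slice computation).} Fix $x\in A$ with $w_g(x)\in[L^{-1},L]$ and $|f\circ\phi_g(x)|^\alpha w_g(x)>\epsilon$. A direct calculation shows that $(x,y)\in A'$ is equivalent to $y\in\bigl[(\delta|f(x)|/2)^\alpha,\,(2|f(x)|/\delta)^\alpha\bigr]$ (intersected with $[1/(2L),2L]$, but this constraint is inactive for $L$ large enough, by $|f(x)|\in[\delta,\delta^{-1}]$). The second condition $|h^{(L)}\circ\phi_g^*(x,y)|^\alpha>\epsilon/2$ reduces (again for $L$ large, by $w_g(x)\in[L^{-1},L]$) to $y<2|f\circ\phi_g(x)|^\alpha w_g(x)/\epsilon$, and under the LHS assumption this upper bound exceeds $2$. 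Consequently, the $y$-slice of the RHS event contains a fixed sub-interval of $y$-values whose Lebesgue length is bounded below by $3/2$ (uniformly in $x\in A$, $g$, and sufficiently large $L$), obtained by choosing the sub-interval judiciously inside $[(\delta|f(x)|/2)^\alpha,\,(2|f(x)|/\delta)^\alpha]$ and below the cutoff $2|f\circ\phi_g(x)|^\alpha w_g(x)/\epsilon$.

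\textbf{Step 3 (Fubini and conclusion).} Integrating the lower bound $\ind_{\text{LHS event}}(x)\cdot\tfrac{3}{2}$ against $\mu\otimes\text{Leb}$ and comparing to the product measure of the RHS event yields
\[\mu(\text{LHS event at }g)\le \tfrac{2}{3}\,\mu\otimes\text{Leb}\bigl\{(x,y)\in A':|h^{(L)}\circ\phi_g^*(x,y)|^\alpha>\epsilon/2\bigr\}.\]
Averaging over $g\in F_n$, taking $\limsup_n$, and then $\limsup_{L\to\infty}$ gives \eqref{e:toshow2}. The main technical point (and only real obstacle) is verifying that the sub-interval of $y$-values in Step 2 has length at least $3/2$ uniformly—this is essentially a bookkeeping exercise in the two inequalities defining $A'$ and the condition on $|h^{(L)}\circ\phi_g^*|$, exploiting that $|f(x)|\in[\delta,\delta^{-1}]$ forces the $A'$-interval to stretch from at most $(1/2)^\alpha$ up to at least $2^\alpha$, while the LHS hypothesis pushes the cutoff well above $2$.
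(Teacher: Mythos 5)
Your overall strategy is the same as the paper's: truncate the Radon--Nikodym derivative to $[L^{-1},L]$ via Lemma \ref{l:radbd}, then compare the $\mu$-measure of the truncated event with the $\mu\otimes\mbox{Leb}$-measure of the event on the Maharam extension by exhibiting, for each $x$ in the truncated event, a $y$-window of Lebesgue measure at least $3/2$ on which $(x,y)$ lies in the right-hand event; the paper does exactly this with the single window $y\in[1/2,2]$ for all $x$, which is the product-set form of your slice-wise Fubini argument, using the same identity $|h^{(L)}\circ\phi_g^*(x,y)|^\alpha=\frac{|f\circ\phi_g(x)|^\alpha w_g(x)}{y}\,\ind\bigl((2L)^{-1}w_g(x)\le y\le 2Lw_g(x)\bigr)$ and the same observation that $w_g(x)\in[L^{-1},L]$ makes the indicator constraints harmless there.

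However, Step 2 as written has a genuine gap when $0<\alpha<1$. Taking $A'$ literally (a two-sided bound on $|h^{(L)}|$, not on $|h^{(L)}|^\alpha$), your slice is indeed $[(\delta|f(x)|/2)^\alpha,(2|f(x)|/\delta)^\alpha]$, but all you can guarantee uniformly over $x\in A$ is that it contains $[2^{-\alpha},2^{\alpha}]$, and since the cutoff $2|f\circ\phi_g(x)|^\alpha w_g(x)/\epsilon$ is only guaranteed to exceed $2$, the certifiable window is $[2^{-\alpha},\min(2,2^{\alpha})]$, of length $\min(2,2^\alpha)-2^{-\alpha}$. This is $\ge 3/2$ precisely when $\alpha\ge 1$; for $\alpha<1$ (say $\alpha=1/2$, $|f(x)|$ near $\delta$, $\delta$ near $1$, and $|f\circ\phi_g(x)|^\alpha w_g(x)$ barely above $\epsilon$) the \emph{entire} admissible slice has length $2^\alpha-(\delta^2/2)^\alpha<3/2$, so no ``judicious choice'' of sub-interval can yield $3/2$, and the claimed uniform bound fails. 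The paper avoids this because its displayed chain actually works with the conditions at the level of $\alpha$-th powers, i.e.\ it reads $A$ as $\{\delta\le|f(x)|^\alpha\le\delta^{-1}\}$ and $A'$ as $\{2^{-1}\delta\le|h^{(L)}(x,y)|^\alpha\le 2\delta^{-1}\}$ (the constants $3/2$ and $2/3$ are calibrated to that reading, the statement's definitions notwithstanding); with that reading your slice becomes $[\delta|f(x)|^\alpha/2,\,2|f(x)|^\alpha/\delta]\supseteq[1/2,2]$ for every $x\in A$, and your argument closes for all $\alpha\in(0,2)$. So either adopt that interpretation explicitly, or settle for the constant $\max\{2/3,\,(2^\alpha-2^{-\alpha})^{-1}\}$ in place of $2/3$ --- harmless for the application in Theorem \ref{t:mpns}, where $\delta,\epsilon$ are arbitrary and only the vanishing of the averages matters, but not the inequality exactly as stated.
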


\begin{proof}First observe that for any $L>1$,
	\begin{eqnarray*}
		&&\frac{3}{2}\mu \left(x\in A:|f\circ \phi_g(x)|^\alpha w_g(x)>\e, w_g(x)\in [L^{-1},L] \right)\\
		&=&\mu\otimes\mbox{Leb}\Big((x,y):\delta\le |f(x)|^\alpha\le \delta^{-1}, |f\circ\phi_g(x)|^\alpha w_g(x)>\e, w_g(x)\in [L^{-1},L],  \\
		&&\qquad\qquad\qquad\qquad\qquad \qquad\qquad\qquad\qquad\qquad \qquad\qquad\qquad \quad y\in [2^{-1},2]\Big) \\
		&\le&\mu\otimes\mbox{Leb}\Big((x,y):2^{-1}\delta\le \frac{|f(x)|^\alpha}{y}\le 2\delta^{-1}, \, \frac{|f\circ\phi_g(x)|^\alpha}{y} w_g(x)>\e/2, \\
		&&\qquad\qquad\qquad\qquad\qquad \qquad y\in [(2L)^{-1},2L], \,y\in [(2L)^{-1}w_g(x),2Lw_g(x)]\Big)  \\
		&=&\mu\otimes\mbox{Leb}\Big((x,y):2^{-1}\delta\le |h^{(L)}(x,y)|\le 2\delta^{-1}, \,|h^{(L)}\circ\phi_g^*(x,y)|^\alpha>\e/2\Big)\,.
	\end{eqnarray*}
	This, together with Lemma \ref{l:radbd}, proves \eqref{e:toshow2}.
\end{proof}

Now, we are ready to prove the main theorem of this section.
\begin{theorem}\label{t:mpns} Let $\{\phi_g\}$ be any non-singular group action of $G$ on $(S,\mu)$. Assume that for the measure-preserving group action $\{\phi_g^*\}$ of $G$ on $(S\times (0,\infty), \mu\otimes\mbox{Leb})$ defined in \eqref{e:Maharram}, and any set $E\subseteq S\times(0,\infty)$ with $\mu\otimes \mbox{Leb}(E)<\infty$, we have 
	\begin{equation*}\lim_{n \to \infty} \frac{1}{|F_n|} \sum_{g \in F_n} \mu\otimes\mbox{Leb}\big(E\cap (\phi_g^{*})^{-1}(E)) =0.
	\end{equation*}
	Then for any $f\in \LL^\alpha(S,\mu)$ and all $\e>0,\,\delta>0$, condition \eqref{condn_gross} holds, that is, 
	\begin{equation}\label{e:thmeq}
		\lim_{n \to \infty} \frac{1}{|F_n|} \sum_{g \in F_n} \mu\big(x \in S:\delta\leq |f_e(x)| \leq \delta^{-1},\, |f_g(x)| \geq \epsilon\big) =0.
	\end{equation}
\end{theorem}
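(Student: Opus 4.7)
\textbf{Proof proposal for Theorem \ref{t:mpns}.}

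The plan is to chain together the three reduction lemmas of this section and then apply the hypothesis on the Maharam extension to an explicit finite-measure set.

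First I would normalize so that $\mu$ is a probability measure, as discussed at the beginning of the section. By Lemma \ref{l:bd}, in order to verify \eqref{e:thmeq} for a general $f \in L^\alpha(S,\mu)$ it suffices to verify it when $f$ is bounded; write $|f| \leq K$. I would then invoke Lemma \ref{l:final}, which bounds the left-hand side of \eqref{e:thmeq} (up to a factor of $2/3$) by
\[
\limsup_{L\to\infty}\limsup_{n\to\infty}\frac{1}{|F_n|}\sum_{g\in F_n}\mu\otimes\mathrm{Leb}\bigl((x,y)\in A':|h^{(L)}\circ\phi_g^*(x,y)|^\alpha>\epsilon/2\bigr),
\]
with $A'$ and $h^{(L)}$ as defined there. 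This reduces the problem from a non-singular action with a general $L^\alpha$ function to a measure-preserving action (the Maharam skew-product $\phi_g^*$) with bounded functions supported on a set of finite measure.

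Next, for each fixed $L > 1$, $\epsilon > 0$, $\delta > 0$, I would introduce the two subsets of $S\times(0,\infty)$
\[
E_1 := A' = \bigl\{(x,y):2^{-1}\delta\leq|h^{(L)}(x,y)|\leq 2\delta^{-1}\bigr\},\qquad E_2 := \bigl\{(x,y):|h^{(L)}(x,y)|^\alpha>\epsilon/2\bigr\},
\]
and set $E := E_1\cup E_2$. Since $h^{(L)}$ is supported in $S\times[1/(2L),2L]$ and $\mu$ is a probability measure, both $E_1$ and $E_2$ have $\mu\otimes\mathrm{Leb}$-measure at most $2L$, hence $\mu\otimes\mathrm{Leb}(E)<\infty$. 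The key observation is that
\[
\bigl\{(x,y)\in A':|h^{(L)}\circ\phi_g^*(x,y)|^\alpha>\epsilon/2\bigr\}=E_1\cap(\phi_g^*)^{-1}(E_2)\subseteq E\cap(\phi_g^*)^{-1}(E),
\]
so the Cesàro average of the left-hand side is dominated by the Cesàro average of $\mu\otimes\mathrm{Leb}(E\cap(\phi_g^*)^{-1}(E))$, which tends to zero by the hypothesis applied to $E$. Letting $L\to\infty$ then gives \eqref{e:thmeq}.

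The substantive work has already been done in Lemmas \ref{l:bd}, \ref{l:radbd}, and \ref{l:final}; the only remaining step is the construction of the finite-measure set $E$ on the Maharam extension, which is straightforward because the truncation in $y$ forces finite measure. There is no real obstacle — the difficulty was packaged into the earlier lemmas, most notably the quantitative manipulation in Lemma \ref{l:final} that interprets the non-singular transfer via a measure-preserving shift on the extended space.
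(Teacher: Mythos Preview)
Your proposal is correct and follows essentially the same route as the paper: reduce to a probability measure and bounded $f$ via Lemma~\ref{l:bd}, pass to the Maharam extension via Lemma~\ref{l:final}, and then dominate the relevant set by $E\cap(\phi_g^*)^{-1}(E)$ for a finite-measure $E$. The only cosmetic difference is that the paper takes the simpler choice $E=E_L:=S\times[(2L)^{-1},2L]$ (the support of $h^{(L)}$) rather than your $E_1\cup E_2$, but both choices work for the same reason.
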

\begin{proof} Fix $\delta>0,\, \e>0$ and any non-singular $G$-action $\{\phi_g\}$ on $(S,\mu)$ and any $f\in \LL^\alpha(S,\mu)$. By Lemma \ref{l:bd} and the discussion preceding it, we can assume that $\mu$ is a probability measure and $f$ is bounded, that is, $|f|\le K$ for some $K>0$. Then by Lemma \ref{l:final}, \eqref{e:thmeq} holds if for all $L>1$ and for $\phi_g^*$ defined from $\phi_g$ according to $\eqref{e:Maharram}$, we can show that
	\begin{equation}\label{e:thmtoshow}
		\lim_{n \to \infty} \frac{1}{|F_n|} \sum_{g \in F_n} \mu\otimes\mbox{Leb} \left((x,y)\in A':|h^{(L)}\circ\phi_g^*(x,y)|^\alpha>\e/2 \right)=0\,,
	\end{equation}
	where \[h^{(L)}(x,y):=\frac{|f(x)|}{y^{1/\alpha}}\ind\left(\frac{1}{2L}\le y\le 2L\right)\qquad \mbox{ and}\] 
	\[A':=\{(x,y)\in S\times(0,\infty):2^{-1}\delta\le|h^{(L)}(x,y)|\le 2\delta^{-1}\}.\]
	Fix $L>1$ and note that
	\[|h^{(L)}(x,y)|=\frac{|f(x)|}{y^{1/\alpha}}\ind\left(\frac{1}{2L}\le y\le 2L\right)\le K(2L)^{1/\alpha}\ind\left(\frac{1}{2L}\le y\le 2L\right)\,.\]
	Define the set $E_L$ as
	\[E_L:=S\times [(2L)^{-1},2L]\,.\]
	Then $\mu\otimes \mbox{Leb}(E_L)=\mbox{Leb}([(2L)^{-1},2L])<\infty$ and
	\begin{eqnarray*}
		&&\mu\otimes\mbox{Leb} \left((x,y)\in A':|h^{(L)}\circ\phi_g^*(x,y)|^\alpha>\e/2 \right)\\
		&\le& \mu\otimes\mbox{Leb}\left((x,y)\in S\times(0,\infty):\ind_{E_L}(x,y)>0, \,\ind_{E_L}\circ\phi_g^*(x,y)>0\right)\\
		&=& \mu\otimes\mbox{Leb}\big(E_L\cap (\phi_g^{*})^{-1}(E_L))\,.
	\end{eqnarray*}
	As $\mu\otimes \mbox{Leb}(E_L)<\infty$, by the assumption of the theorem,
	\[\lim_{n \to \infty} \frac{1}{|F_n|} \sum_{g \in F_n} \mu\otimes\mbox{Leb}\big(E_L\cap (\phi_g^{*})^{-1}(E_L)) =0.\]
	Hence, \eqref{e:thmtoshow} holds for all $L>1$, thus implying \eqref{e:thmeq}. This proves the theorem.
\end{proof}

\section{Stable Random fields indexed by amenable groups}
\subsection{Criterion for Weak Mixing}
In this subsection, we prove one of the main results of this paper that characterizes weak mixing of a stationary $\sas$ random field indexed by any countable amenable group $G$. In a Rosi{\'n}ski representation of a stationary $\sas$ random field indexed by $G$ as in Theorem \ref{omni-sas}, if the group action $\{\phi_g\}_{g \in G}$ has only the null (resp.\ positive) part, that is, $\mathcal{P}=\emptyset$ modulo $\mu$ (resp.\ $\mathcal{N}=\emptyset$) in the Neveu decomposition, then we say that the random field is generated by a null (resp.\ positive) action in that representation. The following proposition shows that this does not depend on the particular Rosi\'nski representation chosen as long as the group $G$ is amenable. 

\begin{prop} If a stationary $\sas$ random field indexed by any countable amenable group $G$ is generated by a positive (resp. null) action in one Rosi\'nski representation, then in every other Rosi\'nski representation it will be generated by a positive (resp. null) action. 
\end{prop}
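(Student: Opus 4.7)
The plan is to lift the Neveu decomposition from the underlying non-singular action to the random field itself and then argue that the resulting independent decomposition is determined by the law of $Y$. Given any Rosi\'{n}ski representation $(S,\mu,\{\phi_g\},f,\{c_g\})$ of $Y$, both Neveu sets $\mathcal{N}$ and $\mathcal{P}$ are $G$-invariant modulo $\mu$. Since the $\sas$ random measure $M$ in \eqref{integrep} is independently scattered, the restrictions $M|_\mathcal{N}$ and $M|_\mathcal{P}$ are independent $\sas$ random measures, and splitting $f_g=f_g\ind_\mathcal{N}+f_g\ind_\mathcal{P}$ yields two independent stationary $\sas$ fields
\[
Y_g\eqd Y_g^{\mathcal{N}}+Y_g^{\mathcal{P}},\qquad g\in G,
\]
generated respectively by a null and by a positive action. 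Being generated by a null (resp.\ positive) action in the given representation is then equivalent to $Y^\mathcal{P}=0$ (resp.\ $Y^\mathcal{N}=0$) in distribution, so the proposition reduces to showing that the laws of $Y^\mathcal{N}$ and $Y^\mathcal{P}$ depend only on the law of $Y$.

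For the null half, I would invoke the forthcoming Theorem \ref{t:main} -- the equivalence $(1)\Leftrightarrow(2)$ in Theorem \ref{t:mainintro}, whose proof requires neither freeness of $\{\phi_g\}$ nor this proposition -- which asserts that a stationary $\sas$ field indexed by a countable amenable group is generated by a null action in some Rosi\'{n}ski representation if and only if it is weakly mixing. Since weak mixing is manifestly an intrinsic property of the law of $Y$, the statement ``generated by a null action in some representation'' is intrinsic to $Y$, hence equivalent to ``generated by a null action in every representation,'' and the null half is settled.

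The positive direction is where the main obstacle lies, since weak mixing does not immediately furnish an intrinsic characterization of being purely positive. My plan is to argue by contradiction. Suppose $Y$ is generated by a positive action in a representation $R_1$, but in a second representation $R_2$ the Neveu summand $Y^{\mathcal{N}'}$ is non-trivial. By the null half already established, $Y^{\mathcal{N}'}$ is a non-trivial weakly mixing stationary $\sas$ field appearing as an independent summand of $Y$. Using additivity of the $\alpha$-spectral norm $\|\cdot\|_\alpha$ along the Neveu partition together with the Cesaro criterion \eqref{condn_gross} of Lemma \ref{lemma_gross}, the contribution of $Y^{\mathcal{N}'}$ to \eqref{condn_gross} vanishes identically, while the corresponding quantity computed from $R_1$ is representation-independent (being equivalent, via Lemma \ref{lemma_gross} and Theorem \ref{thm:wm_amenable}, to failure of weak mixing of $Y$). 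The technical heart of the argument is then a uniqueness-in-distribution statement for the decomposition $Y=Y^\mathcal{P}+Y^\mathcal{N}$, which I plan to obtain by passing to a minimal Rosi\'{n}ski representation -- where Rosi\'{n}ski's classical theory guarantees that the non-singular action is unique up to conjugacy, so the Neveu partition is canonical -- and then transporting this invariance back to an arbitrary Rosi\'{n}ski representation via the standard minimalization procedure. I expect the careful bookkeeping required to match the $\mathcal{N}'$-contribution in $R_2$ to the necessarily absent $\mathcal{N}$-contribution in $R_1$ to be the most delicate step.
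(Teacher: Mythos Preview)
The paper's proof is a one-line reduction to \cite[Theorem~3.1]{wang:roy:stoev:2013}, which (via Takahashi's work \cite{takahashi:1971} on amenable semigroups of positive $L^1$-contractions) characterizes the positive and null parts intrinsically in terms of the spectral functions $\{f_g\}$. Since these are determined by the law of $\mathbf{Y}$ up to Rosi\'nski's standard equivalences, representation-independence follows at once---no appeal to weak mixing or to minimal representations is made.

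Your route has a genuine gap in the positive half. First, invoking Theorem~\ref{t:main} for the null half is delicate: as stated, that theorem already contains the clause ``in some and hence all Rosi\'nski representations,'' and the paper's proof of its easy direction explicitly cites Remark~\ref{rmk:pos_null_decomp}; you must therefore first extract a representation-by-representation version and verify that \emph{its} proof is independent of this proposition (it is, but not as written). More seriously, your Ces\`aro argument for the positive half does not produce a contradiction: the $\mathcal N'$-contribution to \eqref{condn_gross} vanishing says only that the limiting average equals the $\mathcal P'$-contribution, which is entirely consistent with $R_1$ being purely positive---what is representation-independent is \emph{whether} \eqref{condn_gross} holds, not the numerical value of the averages. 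Your fallback to minimal representations is a valid strategy (and would in fact handle both halves simultaneously, making the weak-mixing detour superfluous), but the ``careful bookkeeping'' you defer is not a detail; it is the entire proof, and carrying it out amounts to reproving the Takahashi-type characterization the paper invokes directly.
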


\begin{proof}
	This  follows by mimicking the proof of   \cite[Theorem~3.1]{wang:roy:stoev:2013}, which in turn essentially relies on the work of \cite{takahashi:1971}. The latter work deals with amenable semigroups of contractions. Hence, it  directly applies to our situation. This yields a characterization for a stationary $\sas$ random field indexed by a discrete amenable group $G$ to be generated by a null (or a positive) $G$-action. 
\end{proof}

\begin{rmk} \label{rmk:pos_null_decomp} \textnormal{Following closely the proof of  \cite[Corollary~3.3]{wang:roy:stoev:2013}, we can also establish that for any   stationary $\sas$ field $\mathbf{Y}=\{Y_g\}_{g \in G}$ indexed
		by an amenable group, the decomposition into null and positive parts does not depend on the choice of Rosi\'nski representation and hence is unique in law. We can, therefore, define these to be the positive and null parts, respectively, of $\mathbf{Y}$.}
\end{rmk}

Following is the main result of this section. It extends significantly the main theorems of \cite{samorodnitsky:2005a, wang:roy:stoev:2013} leading to a crucial improvement (see Theorem~\ref{thm:wm_charac_vNalg} below) of the results of \cite{roy:2020}. 

\begin{theorem}\label{t:main} Any stationary $\sas$ random field indexed by a countably infinite amenable group $G$ is weakly mixing if and only if it is generated by a null action (in some and hence all Rosi{\'n}ski representations).
\end{theorem}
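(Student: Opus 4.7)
The plan is to dispatch the two implications separately, with the substantive work concentrated in the ``null action implies weakly mixing'' direction, which is handled by routing the problem through the reduction in Theorem~\ref{t:mpns} to the measure-preserving Maharam extension $\{\phi_g^*\}$, on which the Lindenstrauss--Tempelman pointwise ergodic theorem becomes applicable.

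For the implication that a null action gives weak mixing, I would first fix an increasing tempered F{\o}lner sequence $\mathcal{F}=\{F_n\}$, whose existence is ensured by \cite{lindenstrauss:2001}. Theorem~\ref{thm:wm_amenable} combined with Lemma~\ref{lemma_gross} reduces weak mixing to the verification of \eqref{condn_gross} along $\mathcal{F}$, and Theorem~\ref{t:mpns} reduces this further to showing that for every finite-measure set $E\subset S\times(0,\infty)$,
\[
\frac{1}{|F_n|}\sum_{g\in F_n}(\mu\otimes\mbox{Leb})\big(E\cap (\phi_g^{*})^{-1}(E)\big)\longrightarrow 0.
\]
The pivotal claim is that the null hypothesis on $\{\phi_g\}$ prevents $\{\phi_g^*\}$ from having any invariant set of positive finite $(\mu\otimes\mbox{Leb})$-measure: any such invariant set $E^*$ would, upon integrating out the fiber coordinate via Fubini, produce a finite absolutely continuous $\{\phi_g\}$-invariant measure on $S$, whose support would constitute a positive part, contradicting nullness. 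Given this claim, the Lindenstrauss--Tempelman pointwise ergodic theorem applied to $\mathbf{1}_E$ yields a.e.\ convergence to a non-negative invariant function $h$ with $\int h\le (\mu\otimes\mbox{Leb})(E)<\infty$; were $h$ non-zero on a set of positive measure, $\{h>c\}$ would contradict the claim for small enough $c>0$, so $h=0$ a.e. Dominated convergence (with dominant $\mathbf{1}_E\in L^1$) then converts the pointwise vanishing into the required Cesàro statement.

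For the converse, I would assume the positive part $\mathcal{P}$ is non-trivial with $f$ non-vanishing on it (permissible by Remark~\ref{rmk:pos_null_decomp}), and let $\nu$ be the finite $G$-invariant measure equivalent to $\mu|_{\mathcal{P}}$, with $h=d\nu/d\mu$. A standard change of control measure for the $\sas$ random measure converts the Rosi{\'n}ski representation on $\mathcal{P}$ into one with a measure-preserving action on the probability space $(\mathcal{P},\nu)$ and new kernel $\tilde f=h^{-1/\alpha}f\in L^\alpha(\nu)$. On a level set $\{h\in[c_1,c_2]\}$ of positive measure, and after rescaling the parameters, the integrand in \eqref{condn_gross} restricted to $\mathcal{P}$ dominates $\nu(A\cap\phi_g^{-1}B)$ for fixed positive-measure sets $A\subseteq\{\delta'\le|\tilde f|\le 1/\delta'\}$ and $B=\{|\tilde f|\ge\epsilon'\}$. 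Lindenstrauss' ergodic theorem applied to $\mathbf{1}_B$ on the probability space $(\mathcal{P},\nu)$ shows that the Cesàro averages of $\nu(A\cap\phi_g^{-1}B)$ converge to $\int_A\mathbb{E}[\mathbf{1}_B\mid\mathcal{I}]\,d\nu$, which is strictly positive (after refining $A$ if necessary, using $\int\mathbb{E}[\mathbf{1}_B\mid\mathcal{I}]\,d\nu=\nu(B)>0$). Hence \eqref{condn_gross} fails, and $\{Y_g\}$ is not weakly mixing.

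The main obstacle---highlighted in the introduction---is the absence of a pointwise ergodic theorem for non-singular amenable actions. The plan circumvents this by transferring to the measure-preserving Maharam extension, where the Lindenstrauss--Tempelman theorem is available along a tempered F{\o}lner sequence; the delicate step is then translating the nullness of $\{\phi_g\}$ into the absence of finite-measure invariant sets for $\{\phi_g^*\}$, which is accomplished by the Fubini argument sketched above.
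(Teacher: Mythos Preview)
Your proposal is correct and follows the same overall architecture as the paper: reduction via Lemma~\ref{lemma_gross} and Theorem~\ref{t:mpns} to the measure-preserving Maharam extension, application of the Lindenstrauss--Tempelman pointwise ergodic theorem along a tempered F{\o}lner sequence to $\mathbf{1}_E$, Fatou to obtain $h\in L^1$, and dominated convergence (with dominant $\mathbf{1}_E$) to conclude; the converse direction likewise uses Lindenstrauss' theorem on the invariant probability measure to produce a positive Ces\`aro limit violating \eqref{condn_gross}. The only point of divergence is how you argue $h=0$: the paper observes that weakly wandering sets $B\subset S$ lift to weakly wandering sets $B\times(0,\infty)$, so $\{\phi_g^*\}$ is itself null and hence $h\,dm$ cannot be a non-trivial finite invariant measure, whereas you push the finite $\phi_g^*$-invariant measure $m|_{\{h>c\}}$ forward along the projection $S\times(0,\infty)\to S$ to obtain (by Fubini) a finite $\phi_g$-invariant measure absolutely continuous with respect to $\mu$---both arguments are short and valid.
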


For the proof of this theorem, we will need some definitions, which we recall here from Tempelman \cite{tempelman:2015}. A mapping $T: g\mapsto T_g$ on $G$ is said to be a \emph{right $\LL^p$-representation} of $G$ if, for each $g\in G$, $T_g$ is an operator in $\LL^p(\Lambda,\mathcal{L}, m)$ (here $(\Lambda,\mathcal{L}, m)$ is a $\sigma$-finite standard measure space), $T_{g_1g_2}=T_{g_2}T_{g_1}, \, g_1,g_2\in G$ and $T_e$ is the identity operator. A representation $T$ is \emph{bounded} if $\|T\|_p:=\sup_{g\in G}\|T_g\|_p<\infty$.  For the $T_g$-image of $f\in \LL^p(\Lambda, m)$, we use the notation $fT_g$. We say that a bounded representation $T$ is a \emph{Lamperti} $\LL^p$-representation if $T_g$ is Lamperti for all $g\in G$, that is, for all $f,h\in \LL^p$ with $fh=0$ $m$-almost everywhere (a.e.), one has $(fT_g)(hT_g)=0$  $m$-a.e. Now, we are ready to recall the following result from \cite{tempelman:2015} for a countable amenable group $G$.
\begin{theorem}\cite[Theorem 3.9]{tempelman:2015}\label{t:temp} Let $\{F_n\}$ be a tempered F{\o}lner sequence in $G$. If $1<p<\infty$ and $T: g\mapsto T_g$ is a Lamperti $L^p$-representation, then for each $f\in \LL^p(\Lambda,\mathcal{L}, m)$, 
	\[\lim_{n\to \infty} \frac{1}{|F_n|}\sum_{g\in F_n}fT_g=h \quad m \mbox{-a.e.}\,,\]
	where $h$ is $T_g$-invariant.
\end{theorem}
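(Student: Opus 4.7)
The plan is to derive the theorem from Lindenstrauss's pointwise ergodic theorem for measure-preserving actions of amenable groups, using two standard reductions: the Kan--Lamperti structure theorem (which puts every Lamperti operator in weighted-composition form) and the Maharam skew-product (which converts the resulting non-singular action into a measure-preserving one on an enlarged space).

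First, I would invoke the structure theorem for bounded Lamperti operators on $\LL^p(\Lambda, m)$: for each $g \in G$ there exist a non-singular measurable map $\phi_g: \Lambda \to \Lambda$ and a measurable function $c_g: \Lambda \to \R$ with $|c_g| \equiv 1$ $m$-a.e.\ such that
\[
fT_g(x) = c_g(x) \left(\frac{d(m \circ \phi_g)}{dm}(x)\right)^{1/p} f(\phi_g(x)).
\]
The representation property $T_{g_1 g_2} = T_{g_2} T_{g_1}$ then forces $\{\phi_g\}$ to be a non-singular $G$-action and $\{c_g\}$ to be a $\pm 1$-valued measurable cocycle over it, together with the usual chain rule for the Radon--Nikodym factor. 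The problem thereby reduces to pointwise convergence of the averages $\frac{1}{|F_n|}\sum_{g \in F_n} c_g(x)\, w_g(x)^{1/p} f(\phi_g(x))$, where $w_g := \frac{d(m \circ \phi_g)}{dm}$.

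Next, I would pass to the Maharam extension $(\Lambda \times (0,\infty),\, m \otimes \mbox{Leb})$ equipped with the measure-preserving action $\phi_g^*(x,y) = (\phi_g(x), y/w_g(x))$ from Section \ref{sec-mptons}, and enlarge it further by a two-point fiber $\{+1,-1\}$ with counting measure so as to absorb the cocycle $c_g$, taking the diagonal skew-product $(x,y,\sigma) \mapsto (\phi_g(x),\, y/w_g(x),\, c_g(x)\sigma)$. The datum $f$ is lifted to $\tilde f(x,y,\sigma) := \sigma\, y^{-1/p}\ind_{[1,e]}(y)\, f(x)$, and a routine change of variable shows that $\tilde f \circ \phi_g^*(x,y,\sigma)$ equals the $g$-th summand above, multiplied by a $y$-cutoff whose integral in $y$ is a positive constant independent of $g$. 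After reducing to $f$ bounded and supported on a set of finite measure by a standard truncation (ensuring $\tilde f \in \LL^p$ of the enlarged extension), Lindenstrauss's pointwise ergodic theorem applied along the tempered F{\o}lner sequence $\{F_n\}$ yields almost-everywhere convergence of $\frac{1}{|F_n|}\sum_{g\in F_n}\tilde f \circ \phi_g^*$ to a $\phi_g^*$-invariant limit. Fubini, combined with integration against the fixed $(y,\sigma)$-cutoff, transfers this convergence back to $\Lambda$ and yields the required $T_g$-invariant limit $h$.

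The main obstacle is that Lindenstrauss's theorem is formulated for measure-preserving actions with $\LL^p$ data, and neither the sign cocycle $c_g$ nor the Radon--Nikodym density $w_g$ can be absorbed on $(\Lambda,m)$ alone. Engineering a lift $\tilde f$ that is simultaneously globally in $\LL^p$ of the extended space, reproduces the correct pointwise sums under the skew action, and survives the inverse Fubini step is the technical heart of the argument; the cutoff $y \in [1,e]$ is precisely what keeps $\tilde f$ in $\LL^p$ globally while preserving the bookkeeping needed for the limit to descend cleanly to $\Lambda$. A secondary obstacle is removing the initial bounded-support truncation, which is handled by the usual maximal-inequality / dense-subclass argument: Lindenstrauss's proof also supplies a weak-type $(p,p)$ maximal bound for the Cesàro averages along $\{F_n\}$, and the maximal bound transfers to $\Lambda$ by the same Fubini step, allowing the almost-everywhere convergence to extend from a dense subclass of $\LL^p(\Lambda,m)$ to all of $\LL^p(\Lambda,m)$.
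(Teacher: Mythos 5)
First, note that the paper offers no proof of this statement: it is imported verbatim from Tempelman (\emph{Proc.\ Amer.\ Math.\ Soc.}\ 143 (2015), Theorem 3.9) and used as a black box, so your attempt must be judged on its own merits rather than against an internal argument.

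Your argument has a fatal gap at the Fubini/descent step. Compute the lift explicitly: with $\phi_g^*(x,y,\sigma)=(\phi_g(x),\,y/w_g(x),\,c_g(x)\sigma)$ and $\tilde f(x,y,\sigma)=\sigma\,y^{-1/p}\ind_{[1,e]}(y)f(x)$, one gets
\[
\tilde f\circ\phi_g^*(x,y,\sigma)\;=\;\sigma\,y^{-1/p}\,(fT_g)(x)\,\ind_{[1,e]}\!\left(\frac{y}{w_g(x)}\right),
\]
and the cutoff is emphatically \emph{not} $g$-independent: its integral against $y^{-1/p}\,dy$ equals $\frac{p}{p-1}\big(e^{(p-1)/p}-1\big)\,w_g(x)^{1-1/p}$, so for $p>1$ integrating out $y$ produces averages of $c_g\,w_g\,f\circ\phi_g$ (the $\LL^1$-normalized weights), not of $fT_g=c_g\,w_g^{1/p}f\circ\phi_g$. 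Worse, at a \emph{fixed} $(x,y)$ the indicator retains only those $g\in F_n$ with $w_g(x)\in[y/e,\,y]$ --- typically a vanishing fraction of $F_n$ for a genuinely non-singular action --- so almost-everywhere convergence of the skew-product averages upstairs (which Lindenstrauss does give you) carries no information about $\frac{1}{|F_n|}\sum_{g\in F_n}(fT_g)(x)$ downstairs. To integrate the limit in $y$ you would additionally need a domination in the fiber variable uniform in $n$, which neither Lindenstrauss's theorem nor its maximal inequality supplies; transferring the maximal bound ``by the same Fubini step'' inherits the identical defect. It is instructive that the paper itself moves in the opposite direction precisely to avoid this trap: in Section \ref{sec-mptons} it never descends pointwise limits from the Maharam extension, but instead lifts the quantity to be controlled --- after the truncations of Lemmas \ref{l:bd}, \ref{l:radbd} and \ref{l:final}, only measures of intersections $E_L\cap(\phi_g^*)^{-1}(E_L)$ with $E_L=S\times[(2L)^{-1},2L]$ are needed (Theorem \ref{t:mpns}) --- and only then invokes Tempelman's theorem for the honestly measure-preserving action.

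A secondary (fixable) gap is your opening step. The Lamperti structure theorem for a merely \emph{bounded} representation gives $fT_g=h_g\cdot f\circ\phi_g$ where $|h_g|^p=u_g\,w_g$ with $u_g$ a multiplicative cocycle uniformly bounded away from $0$ and $\infty$; the isometric normal form $|h_g|=w_g^{1/p}$, $|c_g|\equiv 1$ that you assert from the start holds only after one trivializes $u_g$ as a coboundary, e.g.\ by applying an invariant mean (this is where amenability of $G$ enters already at the structural level) and after addressing measurability of the resulting transfer function. As written, your proof establishes at best the isometric, measure-preserving case --- where the statement reduces to Lindenstrauss's theorem --- and the reduction of the general bounded Lamperti case to it is exactly the content of Tempelman's argument that is missing here.
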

\noindent The $h$ in the above limit is the \emph{mean value} of the orbit $\{fT_g, g\in G\}$ and is the projection of $f$ onto the space of all $T$-invariant functions in $\LL^p$.

Now, let us fix any measure-preserving group action $\phi_g^*$ of $G$ acting on $(\Lambda,\mathcal{L}, m)$. We define for each $g\in G$, $T_g: \LL^2(\Lambda,\mathcal{L}, m)\mapsto \LL^2(\Lambda,\mathcal{L}, m)$ as $fT_g=f\circ \phi_g^*$. Note that this is bounded as
\[\|fT_g\|_2^2=\int |f\circ \phi_g^*|^2dm=\int |f|^2dm=\|f\|_2^2\]
since $\phi_g^*$ is measure $m$-preserving. Thus, $\|T_g\|_2=1$ for all $g\in G$, so that $\|T\|_2:=\sup_{g\in G}\|T_g\|_2=1<\infty$. Also, this representation is Lamperti because for all $f,h\in \LL^2(\Lambda, m)$ with $fh=0$ $m$-a.e. and all $g\in G$,
\[(fT_g)(hT_g)=(f\circ \phi_g^*)(h\circ \phi_g^*)=0 \quad m\mbox{-a.e.}\,.\]
Finally, we are ready to prove Theorem \ref{t:main}.

\begin{proof}[Proof of Theorem \ref{t:main}] {We first note the easy direction: if the random field $\mathbf Y$ is not generated by a null action, then it is not  weakly mixing. To see this, we can first without loss of generality assume that the action of $G$  is positive (because of the positive-null decomposition of $\mathbf Y$ as in Remark~\ref{rmk:pos_null_decomp} and the criterion in Lemma ~\ref{lemma_gross}).  Thus, $\mathbf Y$ has an integral representation on $(S,\nu)$ where $\nu$ is a probability measure, which is preserved by the action of the amenable group $G$. Consider a sequence of Folner sets $F_n$ for $G$. Then for any set $A$ with $\nu(A) > 0$, $\frac{1}{|F_n|}\sum_{g \in F_n} \ind_{g.A}$ converges by the ergodic theorem of Lindenstrauss for amenable groups ~\cite{lindenstrauss:2001} to some invariant $h$ such that $\int h d\nu>0$ (since by the dominated convergence theorem, this equals $\nu(A)$). Hence, again by the dominated convergence theorem, $1/|F_n| \int_S \ind_A  \sum_{g \in F_n} \ind_{g.A}d\nu$ converges to $\int_Ah d\nu$, which we claim is a positive number; this in turn violates the weak mixing criterion in Lemma ~\ref{lemma_gross} for $f_e=\ind_A$. To see the claim note that, if $\int_Ah d\nu=0$, then $\int_{g.A}h d\nu=0$ for all $g\in G$, and hence $\int h d\nu=0$. From here, it is easy to generalize to any $f_e \in \LL^\alpha (S,\nu)$ using standard arguments.}

	Now we assume that the random field is generated by a null non-singular action $\phi_g$ acting on $(S,\mu)$. Define the measure-preserving action $\phi_g^*$ of $G$ on $(\Lambda:=S\times (0,\infty), \, \mathcal{L}:= \mathcal{S} \otimes \mathcal{B}_{(0,\infty)}, \, m:=\mu\otimes\mbox{Leb})$ as defined in \eqref{e:Maharram}. We first claim that $\phi_g^*$ is also null. This is because if $B\subseteq S$ is a weakly wandering set (see Section~\ref{sec-wm}) for $\phi_g$, then $B\times (0,\infty)$ is a weakly wandering set for $\phi_g^*$. Therefore, it follows that $\Lambda:=S\times (0,\infty)$ is the measurable union of its weakly wandering subsets since $S$ is so. 
	
	For $f\in \LL^2(\Lambda, m)$, the operator $fT_g=f\circ \phi_g^*$ is a bounded Lamperti $\LL^2$-representation by the discussion following Theorem \ref{t:temp}. Fix any set $E\subseteq S\times(0,\infty)$ with $m(E)=\mu\otimes \mbox{Leb}(E)<\infty$, and let $f=\ind_E$. Clearly $f\in \LL^2(\Lambda, m)$ and by Theorem \ref{t:temp} we have
	\[\lim_{n\to \infty} \frac{1}{|F_n|}\sum_{g\in F_n}\ind_{(\phi_g^*)^{-1}(E)}=h \quad m \mbox{-a.e.}\]
	for some $h$ which is $\phi_g^*$ invariant. Clearly $h\geq 0$ and by the Fatou's Lemma,
	\begin{eqnarray*}
		\int_\Lambda h dm\le \liminf_{n\to \infty}\int_\Lambda \frac{1}{|F_n|}\sum_{g\in F_n}\ind_{(\phi_g^*)^{-1}(E)}dm=\liminf_{n\to \infty} \frac{1}{|F_n|}\sum_{g\in F_n}\int_\Lambda\ind_{(\phi_g^*)^{-1}(E)}dm\\
		=\int_\Lambda \ind_{E}dm=m(E)<\infty\,,
	\end{eqnarray*}
	using the fact that $\phi_g^*$ preserves the measure $m$. Thus $h\in \LL^1(\Lambda,m)$. It follows that $d\nu=hdm$ is a finite $\phi_g^*$-invariant measure equivalent to $m$ on $\{h>0\}$. Since the action $\phi_g^*$ is null, it follows that $h=0$ $m$-a.e. Hence
	\[\lim_{n\to \infty} \frac{1}{|F_n|}\sum_{g\in F_n}\ind_{(\phi_g^*)^{-1}(E)}=0 \quad m \mbox{-a.e.}\,.\]
	Multiplying both sides by $\ind_E$, we have
	\[\lim_{n\to \infty} \frac{1}{|F_n|}\sum_{g\in F_n}\ind_{E\cap(\phi_g^*)^{-1}(E)}=0 \quad m \mbox{-a.e.}\,.\]
	Moreover, for all $n$, 
	\[0\le \frac{1}{|F_n|}\sum_{g\in F_n}\ind_{E\cap(\phi_g^*)^{-1}(E)}\le \ind_E\]
	and $\int_\Lambda \ind_E dm=m(E)<\infty$. Hence, by the dominated convergence theorem,
	\[\lim_{n\to \infty} \frac{1}{|F_n|}\sum_{g\in F_n}m\left(E\cap(\phi_g^*)^{-1}(E)\right)=0 \,.\]
	Thus by Theorem \ref{t:mpns}, condition \eqref{condn_gross} holds for any $f\in \LL^\alpha(S,\mu)$ and all $\e>0,\,\delta>0$. Hence by Lemma \ref{lemma_gross}, the $\sas$ random field is weakly mixing since $G$ is amenable.
\end{proof}

\subsection{Connections to von Neumann Algebras} \label{sec:vNalg}
In this subsection, we give a  characterization of weak mixing of a stationary $\sas$ random field indexed by a countable amenable group $G$ in terms of von Neumann algebras. We refer to
\cite{sunder:1987} for basics of von Neumann algebras and factors. 
Recall that a von Neumann algebra $M \subseteq \mathcal{B}(\mathcal{H})$ (for some separable Hilbert space $\mathcal{H}$) is called a factor if its center $Z(M):=M \cap M^\prime$ is trivial, i.e., $Z(M)= \mathbb{C} 1$. We shall, in general, need  von Neumann's    \emph{central  decomposition}
\begin{equation}
	M = \int_Y M_y\, \nu(dy)  \label{decomp:central}
\end{equation}
of $M$ as a direct integral of factors $\{M_y: y \in Y\}$ over a $\sigma$-finite measure space $(Y, \mathcal{Y}, \nu)$.
Recall (see \cite{sunder:1987}) that a  factor  is of type $II_1$ if it is infinite-dimensional and it admits a normalized trace.
We say \cite[Definition~3.6]{roy:2020} that a von Neumann algebra $M$ does not admit a $II_1$ factor in its central decomposition \eqref{decomp:central} if for $\nu$-almost all $y \in Y$, $M_y$ is a not a $II_1$ factor.

We now describe a crossed product von Neumann algebra called the \emph{group measure space construction} \cite{murray:vonneuman:1936}.
This encodes ergodic  properties of a  non-singular $G$-action $\{\phi_g\}$. Any such action induces, for each $g \in G$, the isometry $\pi_g: \LLL^2(S, \mu) \to \LLL^2(S, \mu)$ given by
\[
(\pi_g h)(s) = h \circ \phi_g(s) \left(\frac{d \mu \circ \phi_g}{d\mu}(s)\right)^{1/2}, \; s \in S.
\]
The unitary representation $\{\pi_g\}_{g \in G}$ of $G$ on $\LLL^2(S, \mu)$  is called the \emph{Koopman representation.} For all $a \in \LLL^\infty(S, \mu)$ (thought of as acting on $\LLL^2(S, \mu)$ by multiplication), for all $g \in G$ and for all $h \in \LLL^2(S, \mu)$,
\begin{equation}
	(\pi_g \, a \, \pi_{g^{-1}} h)(s) = ((\sigma_g a) h)(s), \;\; s \in S, \label{eqn:crossed_product}
\end{equation}
where $\sigma_g a:= a \circ \phi_g$. Thus, the Koopman representation ``normalizes'' $\LLL^\infty(S, \mu)$ inside $\mathcal{B}(\LLL^2(S, \mu))$.


Consider the von Neumann algebra
$
\mathcal{B}(l^2(G) \otimes \LLL^2(S, \mu)) = \overline{\mathcal{B}(l^2(G)) \otimes \mathcal{B}(\LLL^2(S, \mu))}
$
(where the closure is with respect to the weak/strong operator topology). Define a representation of $G$ by $g \mapsto u_g:= \lambda_g \otimes \pi_g$, where $\{\lambda_g\}$ is the left regular representation and $\{\pi_g\}$ is the Koopman representation. We also represent $\LLL^\infty(S, \mu)$ by $a \mapsto 1 \otimes \mathcal{M}_a$, where $\mathcal{M}_a$ is the multiplication (by $a$) operator on $\LLL^2(S, \mu)$. Then,
\begin{equation}
	u_t (1 \otimes \mathcal{M}_a) u_{t^{-1}} = 1 \otimes \mathcal{M}_{\sigma_t a}\,.  \label{eqn:crossed_product_int}
\end{equation}
Define the \emph{group measure space construction} as the double commutant
\[
\LLL^\infty(S, \mu) \rtimes G := \{u_t, 1 \otimes \mathcal{M}_a: t \in G, \, a \in \LLL^\infty(S, \mu)\}^{\prime\prime}.
\]
It was shown in \cite[Theorem~5.4]{roy:2020}  that for any countable group $G$,  non-singular $G$-actions coming from two minimal (and hence Rosi\'nski) representations of a stationary $\sas$ random field $\mathbf{Y}=\{Y_g\}_{g \in G}$ are $W^\ast$-equivalent, i.e.\ the corresponding group measure space constructions are isomorphic as von Neumann algebras. This algebra was defined as the \emph{minimal group measure space construction} of $\mathbf{Y}$.

Two stationary $\sas$ fields $\big\{X^{(1)}_g\big\}_{g \in G_1}$ and $\big\{X^{(2)}_g\big\}_{g \in G_2}$ indexed by two (possibly non-isomorphic) countable groups are called $W^\ast_R$-equivalent (resp., $W^\ast_m$-equivalent) if the group measure space construction corresponding to a Rosi\'{n}ski (resp., minimal) representation of  $\big\{X^{(1)}_g\big\}_{g \in G_1}$ is isomorphic (as a von Neumann algebra) to the group measure space construction corresponding to a Rosi\'{n}ski (resp., minimal) representation of $\big\{X^{(2)}_g\big\}_{g \in G_2}$. A property $P$ of stationary $\sas$ random fields (indexed by a class $\mathcal{G}$ of countable groups) is called $W^\ast_R$-rigid (resp., $W^\ast_m$-rigid) for $\mathcal{G}$ if whenever two such fields (not necessarily indexed by the same group) are $W^\ast_R$-equivalent (resp., $W^\ast_m$-equivalent), one enjoys property $P$ if and only if the other one does. By   \cite[Theorem~3.1]{rosinski:1995}, any minimal representation is a Rosi\'{n}ski representation (but the converse does not hold). Hence  $W^\ast_m\mbox{-equivalence}$ implies $W^\ast_R\mbox{-equivalence}$ and $W^\ast_R$-rigidity implies $W^\ast_m$-rigidity.

In \cite{roy:2020}, it was shown that weak mixing is a $W^\ast_R$-rigid (and hence $W^\ast_m$-rigid) property for stable random fields indexed by $\mathcal{G}:=\{\mathbb{Z}^d: d \in \mathbb{N}\}$ by showing that such a random field is weakly mixing if and only if the group measure space construction corresponding to some (equivalently, any) Rosi\'{n}ski representation does not admit a $II_1$ factor in its central decomposition. The same proof now combines with Theorem \ref{t:main} to give the following, resolving  completely
Problem~3 (and hence Problem~1 and Conjecture~2) of \cite{roy:2020}.

\begin{theorem} \label{thm:wm_charac_vNalg}
	Suppose that $G$ is a countable  amenable group and $\mathbf{Y}=\{Y_g\}_{g \in G}$ is a stationary $\sas$ random field. Suppose that $\mathbf{Y}$ admits a Rosi\'{n}ski representation such that the underlying non-singular $G$-action $\{\phi_g\}$ is free. Then the following are equivalent.
	\begin{enumerate}
		\item[(1)] $\{Y_g\}_{g \in G}$ is weakly mixing;
		\item[(2)]  the group measure space construction corresponding to $\{\phi_g\}_{g \in G}$ does not admit a $II_1$ factor in its central decomposition.
	\end{enumerate}
	In particular, weak mixing is a $W^\ast_R$-rigid (and hence $W^\ast_m$-rigid) property for the class $\mathcal{G}$ of  countably infinite amenable groups.
\end{theorem}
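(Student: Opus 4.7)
The proof hinges on combining the ergodic-theoretic characterization in Theorem \ref{t:main} with an operator-algebraic characterization of null actions already essentially present in \cite{roy:2020}. My plan is first to apply Theorem \ref{t:main} to replace condition $(1)$ by the equivalent condition that $\{\phi_g\}$ is generated by a null action, i.e.\ $\mathcal{P} = \emptyset$ modulo $\mu$ in the Neveu decomposition $S = \mathcal{P} \sqcup \mathcal{N}$. It then suffices to prove that this ``null condition'' is equivalent to $(2)$, namely the absence of a $II_1$ factor in the central decomposition of the group measure space construction $L^\infty(S,\mu) \rtimes G$. The latter equivalence was established in \cite{roy:2020} for $G = \mathbb{Z}^d$, and the plan is to verify that the argument there uses only the non-singular action of a countable discrete group, with no use of commutativity or of the specific structure of $\mathbb{Z}^d$.

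Concretely, the operator-algebraic equivalence would proceed in three steps. First, since $\mathcal{P}$ and $\mathcal{N}$ are $G$-invariant (and the decomposition is unique in law for amenable $G$ by Remark \ref{rmk:pos_null_decomp}), the Neveu decomposition yields a direct-sum decomposition
\[
L^\infty(S,\mu) \rtimes G \;=\; \bigl(L^\infty(\mathcal{P},\mu) \rtimes G\bigr) \,\oplus\, \bigl(L^\infty(\mathcal{N},\mu) \rtimes G\bigr).
\]
Second, on $\mathcal{P}$ there exists a $G$-invariant probability measure equivalent to $\mu|_\mathcal{P}$; together with the assumed freeness of $\{\phi_g\}$, this forces $L^\infty(\mathcal{P},\mu) \rtimes G$ to be a finite tracial von Neumann algebra whose central decomposition (into ergodic components) consists of $II_1$ factors, by classical Murray--von Neumann theory. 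Third, on $\mathcal{N}$ there is no $G$-invariant finite measure equivalent to $\mu|_\mathcal{N}$, which forces every factor in the central decomposition of $L^\infty(\mathcal{N},\mu) \rtimes G$ to be of type $II_\infty$ or $III$ and never $II_1$. Combining these, $L^\infty(S,\mu) \rtimes G$ admits no $II_1$ factor in its central decomposition precisely when $\mathcal{P} = \emptyset$ modulo $\mu$.

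The main obstacle is essentially book-keeping: one has to check that the argument of \cite{roy:2020}, originally framed for $G = \mathbb{Z}^d$, transfers verbatim to general countable amenable $G$. The essential ingredients---direct-integral decomposition of crossed products, the Murray--von Neumann type classification via faithful normal traces, and the compatibility of the Neveu decomposition with the crossed product construction---are group-independent, so the transfer should be routine once the unique-in-law Neveu decomposition for amenable $G$ (Remark \ref{rmk:pos_null_decomp}) is in hand; the real ergodic-theoretic content has already been done in Theorem \ref{t:main}. Finally, the $W^\ast_R$-rigidity conclusion is immediate from the equivalence of $(1)$ and $(2)$: condition $(2)$ depends only on the isomorphism class of $L^\infty(S,\mu) \rtimes G$ as a von Neumann algebra, so any two $W^\ast_R$-equivalent stationary $\sas$ fields (over possibly different countably infinite amenable groups) are either both weakly mixing or both not; since $W^\ast_m$-equivalence implies $W^\ast_R$-equivalence, $W^\ast_m$-rigidity follows as well.
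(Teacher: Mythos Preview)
Your proposal is correct and follows essentially the same approach as the paper: reduce (1) to the null-action condition via Theorem~\ref{t:main}, then invoke the operator-algebraic equivalence from \cite{roy:2020} (specifically the equivalence of (3) and (4) in Theorem~5.10 there), noting that that argument is group-independent, and finally read off $W^\ast_R$-rigidity from the fact that (2) is a von Neumann algebra invariant. Your three-step sketch of the crossed-product argument is in fact more detailed than what the paper records, but it is exactly the content behind the paper's ``follows verbatim'' citation.
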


\begin{proof}  Because of Theorem~\ref{t:main} above, it is enough to establish that $\{\phi_g\}_{g \in G}$ is a null action if and only if the associated group measure space construction does not admit a $II_1$ factor in its central decomposition. This   follows verbatim the proof of equivalence of (3) and (4) of   \cite[Theorem~5.10]{roy:2020}. On the other hand, $W^\ast_R$-rigidity (and hence $W^\ast_m$-rigidity) of weak mixing follows directly from the characterization (2) above. This completes the proof.
\end{proof}

Following the argument used in the proof of Theorem~5.20 in \cite{roy:2020}, one can establish that having a nontrivial \emph{weakly mixing part} (i.e., \emph{null part} as in Remark~\ref{rmk:pos_null_decomp}) is also a $W^\ast_R$-rigid (and hence $W^\ast_m$-rigid) property for the class $\mathcal{G}$ of  countably infinite amenable groups.

The canonical action of any finitely generated group $G$ on its Furstenberg-Poisson boundary $(\partial G, \mu^{FP})$ is free and ergodic, and the associated group measure space construction is a type $III$ factor. This shows, thanks to Theorem~\ref{thm:wm_charac_vNalg}, that the corresponding stationary $\sas$ random fields are nontrivial and weakly mixing as long as $G$ is amenable and $\partial G$ is nontrivial. Examples arise from amenable lamplighter groups \cite{kaimanovich:vershik:1983}.

\section{Boundary actions of hyperbolic groups and Stable Fields}\label{sec-hyp}
We extract from Theorem \ref{t:mpns}, two sufficient conditions to prove ergodicity of a $G$-indexed $\sas$ random field. Suppose that we are given a 
non-singular action $g\to \phi_g$ of a discrete group $G$  on  a sigma-finite measure space $(Y,\mu)$. Let $(X,\nu) = (Y \times (0,\infty), \mu\otimes\mbox{Leb})$ denote the
Maharam extension (Equation \ref{e:Maharram}) equipped with a \emph{$\nu$- preserving action} $\phi_g^*$ as described before Lemma \ref{l:final}. There are two ways in which we shall apply Theorem \ref{t:mpns}
in this section.\\

\noindent {\bf Case 1:  $\mu(Y) < \infty$ and the $G$-action on $Y$ is non-singular with no restriction on Radon-Nikodym derivatives:}\\ Without loss of generality, we may assume that $\mu(Y)=1$, i.e.\ $Y$ is a probability measure space. In this case, any $E\subset X$ satisfying the condition 
$ \nu (E) < \infty$ in Theorem \ref{t:mpns} is contained (up to measure zero) in a countable \emph{disjoint}
union of sets $B$ of the form $B(a_i, b_i) =  Y \times (a_i, b_i)$ with $0<a_i <b_i <\infty$.
We shall show in Section \ref{sec-hypbdy} that for a number of actions with hyperbolic properties, the following condition holds for a non-singular action of a group $G$
on $X$.

\begin{cond}\label{cond-suff}
	$\frac{1}{|F_n|}
	\sum_{g \in F_n}  \nu\bigg(\left(Y \times (\frac{1}{K}, K)\right)\, \cap \,(\phi_g^*)^{-1}\left(Y \times (\frac{1}{K}, K)\right)\bigg)  \to 0 \,\ {\rm as} \,\ n \to \infty. $
\end{cond}

By Theorem \ref{t:mpns} this will suffice to prove ergodicity of the corresponding $G$-indexed $\sas$ random field.
Note further that if \[(y, t) \in A_K:=\left(Y \times \left(\frac{1}{K}, K \right)\right)\, \cap\, (\phi_g^*)^{-1}\left(Y \times \left(\frac{1}{K}, K \right)\right),\] by the definition of $\phi_g^*$, the Radon-Nikodym derivative satisfies $$\frac{1}{K^2}<\frac{d\mu\circ\phi_g^{-1}}{d \mu}(y)<K^2.$$ Hence,  $\nu(A_K)$
in Condition \ref{cond-suff} is at most $(K-K^{-1})\mu(B_K)$, where $B_K$ is given by $$B_K:=\bigg\{y \in  Y: \frac{1}{K^2}<\frac{d\mu\circ\phi_g^{-1}}{d \mu}(y)<K^2\bigg\}.$$

\noindent {\bf Case 2:  $\mu(Y) = \infty$ and the $G$-action on $Y$ is non-singular with  Radon-Nikodym derivatives uniformly bounded away from both $0$ and $\infty$:}\\
In this case, it follows immediately from Theorem \ref{t:mpns} that to prove ergodicity of the corresponding $G$-indexed $\sas$ random field, it suffices to check the following for $\mu(B)<\infty$:

\begin{cond}\label{cond-suff2}
	$\frac{1}{|F_n|}
	\sum_{g \in F_n} \bigg( \mu\big(B \cap \phi_g(B)\big) \bigg) \to 0 \,\  {\rm as} \,\  n \to \infty. $
\end{cond}
We shall check Condition \ref{cond-suff2} in Section \ref{sec-db} below.
\subsection{Boundary action of hyperbolic groups}\label{sec-hypbdy} We refer the reader to \cite[Section 3.1]{athreya:mj:roy:2019} for the relevant material on Patterson-Sullivan measures that we need in this subsection, and to the original sources
\cite{patterson-acta,sullivan-pihes,patterson-expo,coornert-pjm} for details.
We summarize what we need here from Patterson-Sullivan theory. Let $X$ be a Gromov hyperbolic metric space. Let $\partial_G X$ denote the Gromov boundary \cite{gromov-hypgps}. For $x, y\in X$ and $\xi \in \partial_G X$, $\beta_\xi(y,x)$ will denote the \emph{Busemann function} (see \cite{coornert-pjm} for details). In all applications below, $x$ will be chosen as an origin of the hyperbolic space $X$. 

\begin{defn}	\label{def-qcm}
	\cite[p. 721]{cm-gafa} 
	For $X$  Gromov-hyperbolic,   let $M(\partial_G X)$ denote the collection  of positive
	finite Borel measures on  $\partial_G X $. A  $G-$equivariant map $X \to M(\partial_G X)$ sending $x \to \mu_x$
	is said to be a $C-$quasiconformal
	density
	of dimension
	$v$ ($v\geq 0$),
	for some $C\geq 1$, if
	\begin{equation}\label{eq-qc}
		\frac{1}{C}exp(-v\beta_\xi(y,x))
		\leq
		\frac{d\mu_{x}}{d\mu_y} (\xi)
		\leq C exp(-v\beta_\xi(y,x))
	\end{equation}
	for all $x, y \in X$, $\xi\in \partial X$; in particular,
	$$\frac{1}{C} exp(-v\beta_\xi(o,g.o))\leq \frac{d\mu_{g.o}}{d\mu_o} (\xi)
	\leq C exp(-v\beta_\xi(o,g.o)),$$ for all $g \in G$.
\end{defn}	

\begin{eg}\label{eg-confden}
	Examples of quasiconformal densities arise in a hyperbolic setting from the following:
	\begin{enumerate}
		\item Patterson-Sullivan measures $\mups$ on the boundary of non-elementary hyperbolic groups, see \cite{coornert-pjm}.
		\item More generally, Patterson-Sullivan measures $\mups$ on limit sets $\Lambda_G$  of non-elementary groups $G$ acting properly by isometries on a hyperbolic space $X$. In this case, without loss of generality, we may choose $X$ to be the weak convex hull of $\Lambda_G \subset \partial_G X$, i.e.\ $X$ may be chosen to be the union of all bi-infinite geodesics joining pairs of points in $\Lambda_G$. Thus, $\Lambda_G=\partial_G X$ in this case. See \cite{coornert-pjm,sullivan-pihes}.
		\item A non-elementary hyperbolic group $G$ acting on its Poisson boundary \\$(\partial_G G,\mu_p)$, where $\mu_p$ denotes the hitting measure of random walks on the Poisson boundary. By a result of Kaimanovich \cite{kaimanovich}, the underlying space of the Poisson boundary in this case may be taken to be the Gromov boundary $\partial_G G$. Further, the natural metric in this context is the  Green metric \cite{bhm}--a metric quasi-isometric to the word metric on $G$. See \cite{kaimanovich,bhm}.
		\item Anosov representations of hyperbolic groups $G$ in semi-simple Lie groups $L$ (of possibly higher rank) furnish Patterson-Sullivan measures on their limit sets $\Lambda_G$ contained in $L/P$, where $P$ is a parabolic subgroup. 
		Fundamental work of Labourie \cite{labourie} and Guichard-Wienhard \cite{gw} shows that the limit set  $\Lambda_G$ is homeomorphic to the Gromov boundary $\partial_G G$ in this case.
		Dey and Kapovich \cite{dey-kap} prove that the Patterson-Sullivan measure turns out to be a conformal density with certain uniqueness properties in this case. See  \cite{dey-kap}. 
	\end{enumerate}
\end{eg}

Given a quasiconformal  $\mu$ on $\partial_G X$, a basepoint $o \in X$, and $g \in G$, we shall be interested in the $\mu-$measure of the set $$A(K,o,g):=\{\xi\in \partial_G X: |\beta_\xi(o,g.o)|\leq K \}.$$ Equivalently,
$A(K,o,g)$ is the set of points $\xi$ in $\partial_G X$ where $$\frac{1}{C} \leq \frac{d\mu_{o}\circ \phi_g^{-1}}{d\mu_o} (\xi) \leq C,$$ and the relationship between $K, C$ is obtained from Definition \ref{def-qcm} (essentially $C$ is an exponential of $K$).

\begin{lemma}\label{lem-mua20}
	Let $G, \partial_G X$ be any of the examples in Example \ref{eg-confden}, and $\mu$ be a conformal density as in Example \ref{eg-confden}.
	Then  $\mu(A(K,o,g)) \to 0$ as $g \to \infty$. (Here, we interpret $g \to \infty$ as the sequence $g.o$ exiting any bounded set.)
\end{lemma}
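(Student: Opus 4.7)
My plan is to reduce the lemma to Sullivan's shadow lemma via the standard identification of the Busemann function with the Gromov product in a $\delta$-hyperbolic space. Recall the defining limit $(\xi\mid y)_o = \lim_{x\to\xi}\frac{1}{2}(d(o,x)+d(o,y)-d(x,y))$, and $\beta_\xi(o,y) = \lim_{x\to\xi}(d(o,x)-d(y,x))$. A direct comparison (using $\delta$-hyperbolicity to control the limits) gives
\[
\beta_\xi(o,g.o) \;=\; 2(\xi\mid g.o)_o \;-\; d(o,g.o) \;+\; O(\delta),
\]
so that writing $R := d(o, g.o)$, the condition $|\beta_\xi(o,g.o)|\leq K$ forces $(\xi\mid g.o)_o \in \bigl[\tfrac{R-K}{2}-C_0,\; \tfrac{R+K}{2}+C_0\bigr]$ for some $C_0 = O(\delta)$. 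Since $(\xi\mid g.o)_o \leq R + O(\delta)$ automatically, the upper bound is vacuous for large $R$, and we obtain the inclusion
\[
A(K,o,g) \;\subset\; \Bigl\{\xi \in \partial_G X \,:\, (\xi\mid g.o)_o \geq \tfrac{R-K}{2} - C_0 \Bigr\}.
\]

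Next, I would identify this level set with a shadow. Let $z \in X$ be the point on a geodesic from $o$ to $g.o$ at distance $T := \tfrac{R-K}{2} - C_0$ from $o$. Standard hyperbolic geometry shows that $\{\xi : (\xi\mid g.o)_o \geq T\}$ is contained in the shadow $\mathcal{O}_o(B(z,r))$ cast from $o$ of a ball $B(z,r)$ of some radius $r = r(\delta,K)$ independent of $g$. Applying Sullivan's shadow lemma to the quasiconformal density of dimension $v$ yields
\[
\mu\bigl(\mathcal{O}_o(B(z,r))\bigr) \;\leq\; C_1\, e^{-v\, d(o,z)} \;=\; C_1\, e^{-v(R-K)/2 + vC_0},
\]
with $C_1$ depending only on $r$ and the density. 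Chaining the two estimates gives $\mu(A(K,o,g)) \lesssim e^{-v\,d(o,g.o)/2}$, with all constants independent of $g$.

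The final step is to observe that in each of the four settings of Example~\ref{eg-confden} this shadow lemma is available: the classical Coornaert version for (1) and (2), the Green-metric shadow lemma of Blach\`ere--Ha\"{\i}ssinsky--Mathieu for the Poisson boundary in (3), and the higher-rank Patterson--Sullivan shadow lemma of Dey--Kapovich (via the appropriate Busemann-type cocycle on $L/P$ pulled back to $\partial_G G$) for (4). Since $g \to \infty$ means $g.o$ exits every bounded subset of $X$, properness of the $G$-action forces $R = d(o,g.o)\to\infty$, so the exponential bound yields $\mu(A(K,o,g)) \to 0$ as claimed.

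The main subtlety I expect to navigate is case (4): there the ``distance'' $d(o,g.o)$ must be replaced by (a component of) the Cartan projection $\mu_L(\rho(g))$, and the Busemann function is the corresponding Busemann cocycle on $L/P$. Once this translation is set up, the Gromov product / Busemann identity and the shadow lemma go through by the Dey--Kapovich framework, and the argument above runs verbatim.
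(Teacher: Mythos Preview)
Your proposal is correct and is essentially the same argument as the paper's, just phrased in slightly different (and more explicit) language. The paper projects $\partial_G X$ to the geodesic $[o,g.o]$ and asserts that $A(K,o,g)$ lands within $D$ of the midpoint $m_g$, then bounds $\mu(A(K,o,g))\le e^{-v(d(o,m_g)-D)}$; your Busemann/Gromov-product identity $\beta_\xi(o,g.o)=2(\xi\mid g.o)_o-d(o,g.o)+O(\delta)$ is exactly the computation that justifies this midpoint claim, and your shadow-lemma bound $e^{-vR/2}$ is the same as the paper's $e^{-v\,d(o,m_g)}$ since $d(o,m_g)=R/2$. Your treatment of case~(4) via the Cartan projection and the Dey--Kapovich framework is more careful than the paper's one-line appeal to Example~\ref{eg-confden}, but the content is the same.
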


\begin{proof}
	For any geodesic segment $\lambda=[a,b]$ in a hyperbolic space $X$, let $\pi_\lambda$ denote a nearest point projection from $X$ onto $\lambda$.
	Then $\pi_\lambda$ extends to a (coarsely well-defined) map $\Pi_\lambda:
	\partial_G X \to \lambda$. For $o \in X$ as above, and $g \in G$, let $\lambda_g$ denote a geodesic joining $o, g.o$. Let $\Pi_g:
	\partial_G X \to \lambda_g$ denote the resulting nearest point projection 
	$\partial_G X$  to $\lambda_g$.
	
	Let  $m_g$ denote the mid-point of $\lambda_g$, and $[m_g-a,m_g+a]$ denote the geodesic subsegment of $\lambda_g$ of length $2a$ centered at $m_g$.
	We next observe that in each of the examples in Example \ref{eg-confden},  there exists $D$ depending on $K$ such that 
	$A(K,o,g)$ is contained  in $\Pi_g^{-1}([m_g-D,m_g+D])$.
	
	Hence $\mu(A(K,o,g)) \leq e^{-{v} (d(o,m_g)-D)}$. In particular, if $g \to \infty$,  $\mu(A(K,o,g)) \to 0$.
\end{proof}
\begin{theorem}\label{thm-hyp}
	In all the  examples in Example \ref{eg-confden}, Condition \ref{cond-suff} holds. Hence,  the associated $G-$indexed $\sas$ random field  is ergodic.
\end{theorem}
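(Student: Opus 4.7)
The plan is to verify Condition \ref{cond-suff} for each $K>1$; Theorem \ref{t:mpns} in the Case 1 formulation (normalize $\mu$ so that $Y=\partial_G X$ becomes a probability space) then yields ergodicity of the associated $\sas$ random field. The first step is the pointwise bound already recorded just after Condition \ref{cond-suff}: if
$$\psi_K(g) := \nu\bigl((Y\times (K^{-1},K))\cap(\phi_g^*)^{-1}(Y\times(K^{-1},K))\bigr),$$
then $\psi_K(g)\le (K-K^{-1})\,\mu(B_K(g))$, where
$$B_K(g) := \bigl\{y\in Y : K^{-2} < (d\mu\circ\phi_g^{-1}/d\mu)(y) < K^2\bigr\}.$$
Next I would invoke the quasi-conformality relation of Definition \ref{def-qcm}, which is the common feature of all four examples in Example \ref{eg-confden}, to translate this into a Busemann bound: for a suitable $K'=K'(K,C,v)$,
$$B_K(g)\;\subseteq\; A(K',o,g) \;=\; \{\xi\in\partial_G X : |\beta_\xi(o,g.o)|\le K'\}.$$

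The key geometric input now enters through Lemma \ref{lem-mua20}, which gives $\mu(A(K',o,g))\to 0$ as $g.o$ exits every bounded subset of $X$. In every example in Example \ref{eg-confden} the orbit map $g\mapsto g.o$ is proper (or, in the Poisson boundary case, quasi-isometric after switching to Kaimanovich's Green metric; in the Anosov case, a quasi-isometric embedding by the work of Labourie and Guichard--Wienhard cited in Example \ref{eg-confden}), so ``$g\to\infty$ in $G$'' implies ``$g.o\to\infty$ in $X$.'' Combined with the first step, this shows that the uniformly bounded function $\psi_K : G \to [0,K-K^{-1}]$ tends to zero as $g$ leaves every finite subset of $G$.

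Finally, passing from the pointwise vanishing of $\psi_K$ at infinity to vanishing of its Cesaro averages along any exhausting sequence $F_n\uparrow G$ is elementary: given $\epsilon>0$, choose a finite $F\subset G$ off which $\psi_K<\epsilon$ and estimate
$$\frac{1}{|F_n|}\sum_{g\in F_n}\psi_K(g)\;\le\; \frac{|F|(K-K^{-1})}{|F_n|}+\epsilon,$$
whose $\limsup$ is $\epsilon$; equivalently, this is Lemma \ref{lemma:Kv} applied with empty exceptional set. This verifies Condition \ref{cond-suff}, and Theorem \ref{t:mpns} then delivers the ergodicity. The one genuine obstacle in the program is Lemma \ref{lem-mua20} itself, which has already been proved via nearest-point projection in Gromov-hyperbolic geometry; the remaining steps are formal packaging of the Maharam-extension reduction and a trivial Cesaro-averaging argument.
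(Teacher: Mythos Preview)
Your proposal is correct and follows essentially the same route as the paper: both reduce Condition~\ref{cond-suff} to the bound $\psi_K(g)\le (K-K^{-1})\mu(A(K',o,g))$ via the quasiconformal density relation, invoke Lemma~\ref{lem-mua20} for the vanishing as $g\to\infty$, and finish with an elementary Ces\`aro averaging argument (the paper makes the specific choice $F_m=\{g:d(o,g.o)\le m\}$ while you argue for an arbitrary exhaustion, but the computation is the same). Your explicit mention of properness of the orbit map in each of the four examples is a point the paper leaves implicit in its choice of $F_m$.
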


\begin{proof}
	For any $\ep>0$, Lemma \ref{lem-mua20} implies that there exists $N > 0$ such that
	$\mu(A(K,o,g)) < \ep$, whenever $d(o,g.o) \geq N$. In Condition \ref{cond-suff},
	define $F_m$ to be  $$F_m=\{g \in G: g(o,g.o) \leq m\},$$ though any other compact exhaustion would work as well.
	Next, given $r \geq 1$ choose $m>N$ such that $|F_M| > r |F_N|$  for all $M \geq m$.
	Finally, as pointed out in the discussion in the beginning of this section, it suffices to consider
	sets in Condition \ref{cond-suff} to be equal to $(\partial_G X) \times [\frac{1}{K},K]$ for some $K>1$, so that the measure of
	$B_K$ equals $\mu(A(K',o,g))$ (for some $K'$ depending on $K$).
	Then\\
	$$\sum_{g \in F_m} \big( \mu(B_K) \big)=\sum_{g \in F_m} \big( \mu(A(K',o,g)) \big) < |F_m|\ep + |F_N|.$$
	Hence, 
	$$\frac{1}{|F_m|}\sum_{g \in F_m} \big( \mu(B_K) \big)
	< \ep + \frac{1}{r}.$$
	Since  $\ep$ can be chosen arbitrarily small, and $r$ arbitrarily large,  
	Condition \ref{cond-suff} follows.
\end{proof}

\subsection{Double boundary action and Bowen-Margulis-Sullivan measures}\label{sec-db}

We  recall the Bader-Furman construction of Bowen-Margulis-Sullivan  measures \cite{furman-coarse,bader-furman} for $X$ Gromov-hyperbolic and $G$ acting properly discontinuously and cocompactly by isometries on it (e.g.\ $X$ may be taken to be a Cayley graph of $G$ with respect to a finite generating set). 
Let $[\mu]$ be  the Patterson-Sullivan conformal density in Example \ref{eg-confden} (1) on $\partial_G X$. The square class $[\mu\times\mu]$ is supported on  $\partial_G X \times \partial_G X$. The Gromov inner product
on $X$ with respect to the basepoint $o$ will be denoted by
$\langle{x},{y}\rangle_{o}$.
\begin{prop}\label{coarseBMS}\cite[Proposition 1]{furman-coarse}\cite[Proposition 3.3]{bader-furman}
	There exists a $G-$invariant Radon measure, denoted $\mubms$, in the measure class $[\mu\times \mu]$
	on $(\partial_G X \times \partial_G X)\setminus \Delta$, where $\Delta$ denotes the diagonal. Moreover, $\mubms$ has the form
	\[
	d\mubms(x,y)=e^{F(x,y)}\,d\mu(x)\,d\mu(y)
	\]
	where $F$ is a measurable function on $\partial_G X \times \partial_G X$
	of the form 	
	$
	F(x,y)=2v\, \langle{x},{y}\rangle_{o}+O(1),
	$ and $v$ is the Hausdorff dimension of $\mu$. 
\end{prop}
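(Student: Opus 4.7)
The plan is to construct $\mubms$ by starting from a natural ansatz and upgrading it to an exactly $G$-invariant measure via the geodesic-flow interpretation of the double boundary. I would first write down
$$F_0(\xi, \eta) \,:=\, 2v \, \langle \xi, \eta\rangle_o, \qquad \nu_0 \,:=\, e^{F_0(\xi,\eta)}\, d\mu(\xi)\, d\mu(\eta)$$
on $\bigl(\partial_G X \times \partial_G X\bigr) \setminus \Delta$. Since $\langle \xi, \eta\rangle_o$ blows up exactly as $(\xi, \eta) \to \Delta$ and is bounded on compact subsets of the complement, while $\mu$ is a finite Borel measure on $\partial_G X$, the measure $\nu_0$ is locally finite on the double boundary, which already yields the Radon property for the final measure.

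The first computational step is to check quasi-invariance of $\nu_0$ under the diagonal $G$-action. By the isometric invariance of the Gromov product and the standard basepoint-change identity in a Gromov-hyperbolic space,
$$\langle g\xi, g\eta\rangle_o \,=\, \langle \xi, \eta\rangle_{g^{-1}o} \,=\, \langle \xi, \eta\rangle_o \,-\, \tfrac12\bigl(\beta_\xi(o, g^{-1}o) + \beta_\eta(o, g^{-1}o)\bigr) \,+\, O(1),$$
while the quasi-conformality law of Definition \ref{def-qcm} gives $d((\phi_g)_*\mu)/d\mu(\xi) \asymp e^{-v\beta_\xi(g.o, o)}$. Combining these and using $G$-equivariance of the Busemann function, the $e^{F_0}$ factor absorbs the Jacobian of the diagonal action on $\mu \otimes \mu$ up to a bounded additive error, so $\nu_0$ is $G$-quasi-invariant with $d(g_*\nu_0)/d\nu_0$ uniformly bounded above and below by constants depending only on $v$, $C$, and the hyperbolicity constant of $X$.

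The main difficulty is promoting $\nu_0$ to a measure in its class that is \emph{exactly} $G$-invariant. My preferred route is the Hopf parametrization: identify $\bigl((\partial_G X \times \partial_G X)\setminus \Delta\bigr) \times \R$ with the space of parametrized bi-infinite geodesics of $X$, on which $G$ acts by isometries (shifting the $\R$-coordinate by a Busemann cocycle) and $\R$ acts by the geodesic flow. The choice of exponent $2v$ is precisely what is required for the enlarged measure $\nu_0 \otimes dt$ to be $G$-invariant (it is manifestly $\R$-invariant) up to a bounded multiplicative cocycle. One then absorbs that bounded cocycle into a bounded measurable correction of $F_0$, producing an honest $(G \times \R)$-invariant Radon measure on geodesic space. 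Disintegrating against $dt$ (equivalently, quotienting by the geodesic flow) gives the desired $\mubms$, whose density against $\mu \otimes \mu$ has the form $e^F$ with $F = F_0 + O(1)$ as asserted.

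The hardest step, and the main obstacle, is the passage from coarse to exact invariance in the abstract Gromov-hyperbolic setting: unlike in $\mathrm{CAT}(-1)$ spaces, geodesics joining two boundary points are only coarsely unique, and the geodesic flow together with its Busemann cocycle must be built modulo bounded measurable modifications. This is where Furman's coarse Patterson-Sullivan machinery \cite{furman-coarse} is indispensable, as it identifies the bounded cocycle arising above as a coboundary in the appropriate bounded cohomology, delivering precisely the bounded correction $F - F_0$ allowed by the statement.
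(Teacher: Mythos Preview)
The paper does not supply its own proof of this proposition: it is quoted verbatim as \cite[Proposition~1]{furman-coarse} and \cite[Proposition~3.3]{bader-furman}, so there is no in-paper argument to compare against. Your sketch is essentially the Furman/Bader--Furman argument from those references: write down the ansatz $e^{2v\langle\xi,\eta\rangle_o}\,d\mu(\xi)\,d\mu(\eta)$, verify via the basepoint-change identity for the Gromov product together with quasiconformality that the diagonal action has bounded Radon--Nikodym cocycle, and then use the Hopf parametrization and a coboundary argument to upgrade coarse invariance to exact invariance. That is the correct outline, and you have correctly identified the genuine difficulty (coarse geodesics and Busemann functions are only defined up to bounded error in the Gromov-hyperbolic category, so the cocycle-to-coboundary step is where the real work lies).

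One small caution: your claim that $\nu_0\otimes dt$ is already $G$-invariant ``up to a bounded multiplicative cocycle'' on the geodesic space is the heart of the matter, and in the coarse setting even writing down the $\R$-action and the $G$-action on parametrized geodesics requires the measurable-selection and bounded-error bookkeeping of \cite{furman-coarse}; you should not expect this to be a one-line computation. Also be careful with the sign conventions when matching $d(g_*\mu)/d\mu$ against $e^{-v\beta_\xi(\cdot,\cdot)}$---the cancellation with the $e^{2v\langle\cdot,\cdot\rangle}$ term only works cleanly once the Busemann and Gromov-product conventions are aligned, though of course any sign slip is absorbed by the $O(1)$.
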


The measure $\mubms$ is called the \emph{Bowen-Margulis-Sullivan  measure.} 
\begin{prop}\label{prop-db}
	Let $G$ be a Gromov hyperbolic group acting properly discontinuously and cocompactly by isometries on $X$. Let $\mubms$  be the Bowen-Margulis-Sullivan  measure on  $(\partial_G X \times \partial_G X)\setminus \Delta$. Then 
	Condition \ref{cond-suff2} is satisfied for the diagonal $G-$action on 
	$(\partial_G X \times \partial_G X,\mubms)$. Hence,  the associated $G-$indexed $\sas$ random field  is ergodic.
\end{prop}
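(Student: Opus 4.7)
The plan is to apply Theorem~\ref{t:mpns} via Condition~\ref{cond-suff2}. Since $\mubms$ is $G$-invariant by Proposition~\ref{coarseBMS}, the Radon-Nikodym derivatives are identically $1$, so we are in Case~2 of Section~\ref{sec-hyp} and it suffices to verify Condition~\ref{cond-suff2}. Using that $\mubms$ is Radon (and in particular inner regular), I would reduce to checking the condition for compact $B \subset (\partial_G X \times \partial_G X)\setminus\Delta$. For such a $B$, compactness together with disjointness from the diagonal produces a constant $C>0$ such that $\langle x,y\rangle_o \le C$ for all $(x,y)\in B$; geometrically, the bi-infinite geodesic joining $x$ and $y$ passes within bounded distance of the basepoint $o$.

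The heart of the argument is a shadow-lemma estimate. If $(x,y)\in B\cap \phi_g(B)$, then by $G$-invariance of the Gromov product we also have $\langle x,y\rangle_{g.o} = \langle g^{-1}x, g^{-1}y\rangle_o \le C$, so the geodesic from $x$ to $y$ passes within bounded distance of both $o$ and $g.o$. A standard $\delta$-hyperbolicity argument then forces $(x,y)$ to lie in
\[
\bigl(O_o(g.o,r) \times O_{g.o}(o,r)\bigr) \;\cup\; \bigl(O_{g.o}(o,r) \times O_o(g.o,r)\bigr),
\]
where $O_a(b,r)$ denotes the shadow from $a$ of the $r$-ball about $b$ and $r$ depends only on $C$ and the hyperbolicity constant of $X$. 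By the Sullivan-Coornaert shadow lemma, each such shadow has Patterson-Sullivan mass $\asymp e^{-v\,d(o,g.o)}$, where $v$ is the Hausdorff dimension of $\mu$. Combined with $e^{F(x,y)} = e^{2v\langle x,y\rangle_o + O(1)} = O_C(1)$ coming from Proposition~\ref{coarseBMS}, this yields the key bound
\[
\mubms\bigl(B\cap \phi_g(B)\bigr) \;\lesssim\; e^{-2v\,d(o,g.o)}.
\]

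To conclude, I would take the exhausting sequence $F_n := \{g\in G : d(o,g.o)\le n\}$. Coornaert's theorem applied to the cocompact action of $G$ on $X$ identifies the exponential growth rate of $G$ with $v$; in particular $\#\{g : d(o,g.o)\in[k,k+1]\}\lesssim e^{vk}$ and $|F_n|\asymp e^{vn}$. Summing in spherical shells,
\[
\sum_{g\in F_n} \mubms(B\cap \phi_g(B)) \;\lesssim\; \sum_{k=0}^n e^{vk}\cdot e^{-2vk} \;=\; \sum_{k=0}^n e^{-vk} \;=\; O(1),
\]
so the Ces\`aro average is $O(e^{-vn})\to 0$. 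This verifies Condition~\ref{cond-suff2}, and by Theorem~\ref{t:mpns} produces the moment condition \eqref{condn_gross}, which via Lemma~\ref{lemma_gross} gives $\mathcal{F}$-mixing of the associated $\sas$ field; a direct invariance argument then upgrades $\mathcal{F}$-mixing to ergodicity.

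The main obstacle will be the geometric reduction to a union of two product shadows: carefully controlling the uniformity of $r$ in terms of $C$ and the hyperbolicity constant, and verifying that the symmetrization over which endpoint lies in which shadow does not inflate the bound. Once that geometric step is in place, the remainder is a quantitative combination of the Sullivan-Coornaert shadow lemma, Coornaert's growth estimate for cocompact hyperbolic groups, and the explicit form of the BMS density from Proposition~\ref{coarseBMS}.
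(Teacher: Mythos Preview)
Your overall strategy is sound and is a genuinely different, more quantitative route than the paper's. The paper reduces to sets $\Omega_A$ of bi-infinite geodesics through a compact $A\subset X$, then uses cocompactness to translate the midpoint of $[A,g.A]$ near $o$ and invokes only the qualitative fact that $\mubms(\Omega_A\cap g.\Omega_A)\to 0$; the Ces\`aro conclusion is then obtained by the soft density argument of Theorem~\ref{thm-hyp}. You instead bound $\mubms(B\cap\phi_g(B))$ directly via shadows and sum against Coornaert's growth estimate. Both are valid, and your version gives a sharper decay rate.

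There is, however, one concrete error in your shadow step. The Sullivan--Coornaert shadow lemma gives $\mu_x\bigl(O_x(y,r)\bigr)\asymp e^{-v\,d(x,y)}$ for the Patterson--Sullivan measure \emph{based at $x$}. In the BMS density $d\mubms(x,y)=e^{F(x,y)}\,d\mu_o(x)\,d\mu_o(y)$ the reference measure is $\mu_o\times\mu_o$, so while $\mu_o\bigl(O_o(g.o,r)\bigr)\asymp e^{-v\,d(o,g.o)}$ is correct, the other factor satisfies $\mu_o\bigl(O_{g.o}(o,r)\bigr)\asymp 1$, not $e^{-v\,d(o,g.o)}$. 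Indeed, for $\xi\in O_{g.o}(o,r)$ one has $\beta_\xi(o,g.o)\approx -d(o,g.o)$, and the conformal relation $\frac{d\mu_o}{d\mu_{g.o}}(\xi)\asymp e^{-v\beta_\xi(o,g.o)}$ converts $\mu_{g.o}\bigl(O_{g.o}(o,r)\bigr)\asymp e^{-v\,d(o,g.o)}$ into $\mu_o\bigl(O_{g.o}(o,r)\bigr)\asymp 1$. Hence the correct bound is
\[
\mubms\bigl(B\cap\phi_g(B)\bigr)\;\lesssim\; e^{-v\,d(o,g.o)},
\]
not $e^{-2v\,d(o,g.o)}$. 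This does not damage your conclusion: summing in shells, $\sum_{g\in F_n}e^{-v\,d(o,g.o)}\lesssim\sum_{k=0}^n e^{vk}e^{-vk}=O(n)$, and dividing by $|F_n|\asymp e^{vn}$ still gives a Ces\`aro average tending to~$0$. So the argument survives with this correction; just be careful to track which basepoint the shadow lemma refers to.
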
 

\begin{proof} We need to
	check condition \ref{cond-suff2} for finite measure subsets of $\partial_G X \times \partial_G X$.
	Note that $\partial_G X \times \partial_G X\setminus \Delta$ may  be identified with
	bi-infinite geodesics in $X$ joining pairs of distinct points on $\partial_G X$.
	We further note that  finite measure subsets of  $(\partial_G X \times \partial_G X,\mubms)$ can equivalently be generated  by the collection of bi-infinite geodesics in $X$ passing through a compact subset of $X$. Let $\om_A \subset
	\partial_G X \times \partial_G X$ then denote the collection of bi-infinite geodesics in $X$ passing through a compact $A \subset X$.
	
	It thus suffices to show that for any compact $A \subset X$, Condition \ref{cond-suff2} is satisfied for $\om_A$. We shall prove the stronger assertion, as in Lemma \ref{lem-mua20} that $\mubms (\om_A \cap g. \om_A) \to 0$ as $g \to \infty$.
	The same argument as in the proof of Theorem \ref{thm-hyp} will then furnish 
	Condition \ref{cond-suff2}.
	
	To prove that $\mubms (\om_A \cap g. \om_A) \to 0$ as $g \to \infty$, observe that 
	$\mubms (\om_A \cap g. \om_A)$ is the same as the measure of geodesics passing through both $A$ and $g.A$. Let $d_g=d(A,g.A)$ denote the distance (in $X$) between $A$ and $g.A$. Let
	$\lambda(g,A)$ denote a shortest geodesic joining $A$ and $g.A$, and $l_g$ denote its length. Let $m_g$ denote the mid-point of $\lambda(g,A)$. Since $G$ acts cocompactly on $X$, there is a uniformly bounded constant $D_0$ (the diameter of a fundamental domain) and an element $h \in G$ such that $h.m_g$ lies within $D_0$ of 
	a fixed basepoint $o\in X$. Further, since $\mubms$ is $G-$invariant by Proposition
	\ref{coarseBMS}, $\mubms (\om_A \cap g. \om_A)=\mubms (h.\om_A \cap hg. \om_A)$, and 
	$\mubms (h.\om_A \cap hg. \om_A)$ is the measure of geodesics passing through both
	$h.A$ and $hg.A$.
	The distance of both $h.A$ and $hg.A$ from $o$ is at least $(\frac{1}{2}d(o,g.o)-D_0)$ as $h.\lambda_g$ is (coarsely) centered at $h.m_g$.
	Let $D_g=d(o, h.A)$. Clearly, $D_g \to \infty$ as $g \to \infty$.
	Finally, the measure $\mubms (h.\om_A \cap hg. \om_A)$ is at most $C_0 e^{-vD_g}$, where
	\begin{enumerate}
		\item $C_0$ depends only on $A$,
		\item $v$ is  the Hausdorff dimension of $\partial_G X$ with respect to the Patterson-Sullivan measure $\mu$.
	\end{enumerate}
	In particular, $\mubms (h.\om_A \cap hg. \om_A)\to 0$   as $g \to \infty$, proving our claim.
\end{proof}

\noindent {\bf Open Problems:} We conclude this paper with some open questions for stationary $\sas$ random fields indexed by amenable and hyperbolic groups. \\
1) Does Theorem \ref{t:main} hold for left-stationary max-stable random fields indexed by amenable groups?\\
2) Does Theorem \ref{t:main} hold for amenable actions (in the sense of Zimmer)?\\
3) In the  case of hyperbolic groups, does null action  characterize ergodicity of the associated $\sas$ random field? (Note that the actions in Example \ref{eg-confden} are null.)\\
4) Is it possible to characterize strong mixing for stationary $\sas$ random fields indexed by amenable groups in terms of the underlying non-singular action?\\
5) What is the role of type $III$ factors on mixing properties?

\end{document}